\newcommand{\C}{{\mathbb C}}
\newcommand{\Z}{{\mathbb Z}}
\newcommand{\sbar}{\overline{s}}
\newtheorem{theorem}{Theorem}[section]
\newtheorem{proposition}[theorem]{Proposition}
\newtheorem{observation}[theorem]{Observation}
\newtheorem{lemma}[theorem]{Lemma}
\newtheorem{Claim}[theorem]{Claim}
\newtheorem{claim}[theorem]{Claim}
\newtheorem{corollary}[theorem]{Corollary}
\newtheorem{Main Conjecture}[theorem]{Main Conjecture}
\newtheorem{definition}[theorem]{Definition}
\newtheorem{example}[theorem]{Example}
\newtheorem{problem}{Problem}
\title{Combinatorics and Representation Theory of Special Cases of Chern Plethysm}
\author{Nathaniel Libman \and Gidon Orelowitz}
\begin{document}

\begin{abstract}
    Chern plethysm (introduced by Billey, Rhoades, and Tewari) is a geometric way to produce Schur-positive symmetric polynomials. We present combinatorial interpretations for the Schur expansions of special cases of Chern plethysm. We also exhibit a symmetric group module whose Frobenius characteristic is (a symmetric function analog of) one of these cases, generalizing a result of Reiner and Webb.
\end{abstract}

\maketitle

\section{Introduction}
Schur polynomials, and more generally Schur positive symmetric polynomials, are ubiquitous in algebraic combinatorics and representation theory. The Schur polynomials are the characters of the irreducible polynomial representations of the general linear group, and they are explicitly defined in terms of \emph{semistandard Young tableaux} (precise definitions will be recalled in Section 2). They form a basis of the complex vector space of symmetric polynomials in $n$ variables. A symmetric polynomial is \emph{Schur positive} if it can be written as a nonnegative integer linear combination of Schur polynomials.

A Schur polynomial is determined by a partition $\lambda = (\lambda_1, \lambda_2, \dots, \lambda_l)$ and a positive integer $n$ (the number of variables). When $\lambda = (m)$, the corresponding Schur polynomial $s_\lambda$ is called the \emph{homogeneous symmetric polynomial}, and when $\lambda = (1^m) := (1, 1, \dots, 1)$, the corresponding Schur polynomial is called the \emph{elementary symmetric polynomial}. In these cases, we say that $\lambda$ is a \emph{single row} or \emph{single column}, respectively. For example, if $n=3$ and $\lambda = (1,1)$, the corresponding Schur polynomial is
\[
s_{(1,1)}(x_1, x_2, x_3) = x_1x_2 + x_2x_3 + x_3x_1
\]

Chern plethysm is a geometric way to produce Schur positive symmetric polynomials. It is related to the notion of \emph{classical plethysm} of Schur polynomials; it involves a composition operation which appears simple in principle, but produces very complex results. The Schur positivity of Chern plethysm follows from work of Pragacz (\cite{pragacz}) and Fulton-Lazarsfeld (\cite{fultonLaz}). In \cite{billeyrhoadestewari}, Billey-Rhoades-Tewari explored a particular case of Chern plethysm: the \emph{Boolean product polynomials}, defined as follows.
\begin{definition}
    For $1 \leq k \leq n$, let
    \[
    B_{n,k} := \prod_{1 \leq i_1 < i_2 < \dots < i_k \leq n} (x_{i_1} + \dots + x_{i_k})
    \]
\end{definition}
Of particular interest in that paper is the case $k=n-1$. In that case, the work by Reiner and Webb in \cite{reinerwebb} gives a combinatorial interpretation for the Schur coefficients in terms of descents of \emph{standard Young tableaux}. That work also gives a representation-theoretic interpretation of this case in terms of poset homology.

Expanding on \cite{reinerwebb}, in this paper we consider a more general class of Chern plethysms for which the Boolean product polynomial with $k=n-1$ is just one example. In general, the result of Chern plethysm depends on two partitions (an \emph{inner} partition $\mu$ and an \emph{outer} partition $\lambda$) and $n$, the number of variables involved. This paper concerns the case $\mu = (1^{n-1})$. We can think of the resulting Chern plethysm as plugging in the sums of all but one of the variables at a time into a Schur polynomial. For example, if $\lambda = (1^n)$, we recover the Boolean product polynomial $B_{n, n-1}$.

Our first results (Theorems \ref{thm:columncase} and \ref{thm:rowcase}) give combinatorial expansions for this case of Chern plethysm when $\lambda$ is a single column or single row. Both results give tableaux-theoretic interpretations for the coefficients: in the column case, the coefficients count certain standard tableaux we call \emph{parity tableaux}; in the row case the coefficients count standard tableaux of a given modified shape.

We then turn to finding combinatorial interpretations for the coefficients of specific terms in the Schur expansions of Chern plethysm with $\mu = (1^{n-1})$ and general $\lambda$. We are able to find compact combinatorial descriptions for the coefficients indexed by one row, two rows, or a column (\ref{cor:singlerow}, \ref{cor:tworows}, and \ref{thm:singlecolumn}). The coefficient indexed by one row is simply counted by a set of semistandard tableaux, while the coefficients indexed by two rows are counted by a certain sum of pairs of skew tableaux with conditions. The computation of the coefficient of a column uses the well-known Lindstr\"om-Gessel-Viennot lemma for counting lattice paths (\cite{gessel}), and gives a formula in terms of fillings of Young diagrams which are not quite semistandard. Finding a complete combinatorial description of all of the coefficients for general $\lambda$ remains open.

We conclude with Theorem \ref{thm:c(x)}, which gives a representation-theoretic interpretation of the case when $\lambda$ is a column. We are no longer able to use homology of posets (as in \cite{reinerwebb}), but we are able to use spectral sequences and a result in \cite{gan} to relate Chern plethysm to the homology of a certain algebraic complex. 

\section{Definitions and preliminary results}

A \emph{partition} $\lambda = (\lambda_1, \lambda_2, \dots, \lambda_l)$ is a weakly decreasing sequence of nonnegative integers. A \emph{Young diagram} for the partition $\lambda$ is an upper left justified series of boxes with $\lambda_i$ boxes in each row. Given two partitions $\lambda$ and $\mu$ with $\mu_i \leq \lambda_i$ for all $i$, the skew shape $\lambda / \mu$ is the Young diagram for $\lambda$ with the Young diagram for $\mu$ deleted from it.

For integers $a\leq b$, we let $[a,b] := \{a,a+1,\dots, b-1,b\}$, and we write $[1,n]$ as $[n]$.
For a set of integers $S$ and a skew partition $\lambda/\mu$, we define the corresponding set of semistandard Young tableaux, denoted ${\rm SSYT}(\lambda/\mu,S)$, to be the set of all fillings of the skew shape $\lambda / \mu$ with the numbers $S$ such that the entries of each filling are weakly increasing along rows and strictly increasing along columns.  When $S = [n]$, the brackets are omitted, and when $S = \mathbb{N}$ the argument is omitted entirely.  Given $T\in {\rm SSYT}(\lambda/\mu,S)$ and integers $r,c$ such that $\mu_r < c\leq \lambda_r$, let $T(r,c)$ denote the label in row $r$ and column $c$ of $T$.

We define ${\rm SYT}(\lambda/\mu,n)\subseteq {\rm SSYT}(\lambda/\mu,n)$ to be the elements of ${\rm SSYT}(\lambda/\mu,n)$ that use each value at most once and are strictly increasing along rows. Note that we allow entries larger than the number of boxes in this definition. Let $f^{\lambda/\mu,n} = |{\rm SYT}(\lambda/\mu,n)|$.  When $\mu = (0)$ it is omitted, and when $n = |\lambda| - |\mu|$, $n$ is occasionally omitted from the argument of ${\rm SYT}$ and $f^{\lambda/\mu,n}$ as well. Finally, when $\alpha$ is not a skew partition, we define $f^\alpha$ to be zero.

Given an element $T \in {\rm SSYT}(\lambda, n)$, we define $wt(T) = (wt_1(T),\dots, wt_n(T))$, where $wt_i(T)$ is the number of $i$'s that appear in $T$.
For a tuple $a = (a_1,\dots, a_n)$, define $x^a:= x_1^{a_1}\dots x_n^{a_n}$.  
The \emph{Schur polynomial} indexed by $\lambda$ is defined as follows.

\begin{definition}
    \[
    s_\lambda(x_1, \dots, x_n) := \sum_{T \in {\rm SSYT}(\lambda, n)} x^{wt(T)}
    \]
\end{definition}
Let $\varepsilon := \sum_{w \in \mathfrak{S}_n} \text{sign}(w) \cdot w$ denote the antisymmetrizing element of the group algebra $\mathbb{C}[\mathfrak{S}_n]$ of the symmetric group on $n$ letters. The group $\mathfrak{S}_n$ acts on the polynomial ring $\mathbb{C}[x_1, \dots, x_n]$ by permuting variable indices and the group algebra acts by linear extension. We have the following useful formula for the Schur polynomial $s_\lambda(x_1, \dots, x_n)$. (This is the \emph{bialternant formula}, which is actually the original definition dating back to 1841; see \cite{Jacobi1841}).
        \begin{equation}
        \label{eq:bialt}
            s_\lambda(x_1, \dots, x_n) = \frac{\varepsilon \cdot x^{\lambda + \delta}}{\varepsilon \cdot x^\delta}
        \end{equation}
where $\delta = (n-1, n-2, \dots, 1, 0)$.

Let $X = \mathbb{P}^\infty \times \dots \times \mathbb{P}^\infty$ be the $n$-fold product of infinite-dimensional complex projective space, and let $\mathcal{E} = \ell_1 \oplus \dots \oplus \ell_n$ be the direct sum of the tautological line bundles over the $n$ factors of $X$. Then we have the presentation $H^\bullet(X) = \Z[x_1, \dots, x_n]$, where the Chern roots of $\mathcal{E}$ are the variables $x_1, \dots, x_n$.

Let $V = \mathbb{C}^n$ be the standard $n$-dimensional complex vector space. We refer the reader to Chapter $8$ of \cite{fulton_1996} for the definition of the \emph{Schur functor} $\mathbb{S}^\mu(V)$ (therein referred to as a \emph{Schur module} $V^\mu$). We also define $\mathbb{S}^\mu(\mathcal{E})$ to be the new bundle with fibers $\mathbb{S}^\mu(\mathcal{E})_p := \mathbb{S}^\mu(\mathcal{E}_p)$. We note that the Chern roots of $\mathbb{S}^\mu(\mathcal{E})$ are the sums $\sum_{\square \in T} x_\square$, where $T$ ranges over all elements of $\text{SSYT}(\mu, n)$.

In \cite{billeyrhoadestewari}, the authors defined the notion of \emph{Chern plethysm} $F(\mathcal{E})$, where $F$ is a symmetric function. In that paper, and in \cite{pragacz} (using work of Fulton-Lazarsfeld in \cite{fultonLaz}) the authors showed that if $\lambda$ and $\mu$ are partitions, then $s_\lambda(\mathbb{S}^{\mu}(\mathcal{E}))$ is always Schur positive. We restate the definition of Chern plethysm in this case here:
\begin{definition}
    Given two partitions $\lambda$ and $\mu$, we have 
    \[
    s_\lambda(\mathbb{S}^{\mu}(\mathcal{E})) = s_\lambda\left(\dots, \sum_{\square \in T} x_\square, \dots\right)
    \]
    where the entries of $s_\lambda$ are indexed by all semistandard Young tableaux $T$ of shape $\mu$, and $\square \in T$ refers to the multiset of labels in the tableau $T$ (we implicitly set all other entries of $s_\lambda$ to $0$). 
\end{definition}

The problem of determining combinatorial or representation-theoretic interpretations for this Schur positivity remains open in general.

In this paper, we consider the symmetric polynomials $s_\lambda(\mathbb{S}^\mu(\mathcal{E}))$ in the special case where $\mu = (1^{n-1})$. Fixing $\mu = (1^{n-1})$, the Chern roots of $\mathbb{S}^{\mu}(\mathcal{E})$ are $\{(x_1+\dots+x_n)-x_i: 1 \leq i \leq n\}$. We will denote $s_{\lambda}(\mathbb{S}^{\mu}(\mathcal{E}))$ by
\[
\sbar_\lambda(x_1,\dots, x_n) := s_{\lambda}((x_1+\dots+x_n)-x_1, \dots, (x_1+\dots+x_n)-x_n)
\]
For example, 
\begin{align*}
    \sbar_{(1)}(x_1,\dots, x_n) & = \sum_{i=1}^n (x_1+\dots+x_n)-x_i \\
    & = n(x_1+\dots+x_n) - (x_1+\dots+x_n)\\
    & = (n-1)(x_1+\dots+x_n) \\
    & = (n-1)s_{(1)}(x_1,\dots, x_n)
\end{align*}

We also use the notation
\[
[s_\mu]\sbar_\lambda(x_1,\dots x_n)
\]
to denote the coefficient of $s_\mu$ in the Schur expansion of $\sbar_\lambda(x_1,\dots, x_n)$. We begin with a simple observation.

\begin{observation}
\label{prop:degree}
$[s_\mu]\sbar_\lambda(x_1,\dots x_n) = 0$ unless $|\lambda| = |\mu|$
\end{observation}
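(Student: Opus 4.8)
The plan is to argue purely by homogeneity of degree, exploiting the fact that the Schur polynomials of a fixed degree form a basis for the corresponding homogeneous component of the ring of symmetric polynomials. The two facts I would invoke are: (i) the Schur polynomial $s_\lambda(y_1, \dots, y_n)$ is homogeneous of degree $|\lambda|$ in its arguments, which is immediate from the definition $s_\lambda = \sum_{T} y^{wt(T)}$ since every monomial $y^{wt(T)}$ has total degree $|\lambda|$; and (ii) the set $\{s_\mu : |\mu| = d\}$ spans the space of homogeneous symmetric polynomials of degree $d$ in $x_1, \dots, x_n$.

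First I would observe that each Chern root $(x_1 + \dots + x_n) - x_i$ is homogeneous of degree $1$ in the variables $x_1, \dots, x_n$. Substituting these degree-$1$ expressions for the arguments of $s_\lambda$, which is itself homogeneous of degree $|\lambda|$, shows that $\sbar_\lambda(x_1, \dots, x_n)$ is homogeneous of degree $|\lambda|$ in $x_1, \dots, x_n$. This is the crux: a homogeneous polynomial of degree $|\lambda|$ can only lie in the span of Schur polynomials $s_\mu$ with $|\mu| = |\lambda|$.

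Next I would confirm that $\sbar_\lambda$ is genuinely symmetric in $x_1, \dots, x_n$, so that its Schur expansion is well-defined. This holds because applying a permutation $w \in \mathfrak{S}_n$ to the variables sends the $i$-th Chern root $(x_1 + \dots + x_n) - x_i$ to $(x_1 + \dots + x_n) - x_{w(i)}$, i.e.\ it merely permutes the Chern roots among themselves; since $s_\lambda$ is symmetric in its arguments, $\sbar_\lambda$ is invariant under $w$. Combining this with the homogeneity observation, any $s_\mu$ appearing with nonzero coefficient in the Schur expansion of the degree-$|\lambda|$ symmetric polynomial $\sbar_\lambda$ must satisfy $|\mu| = |\lambda|$, which is exactly the claim.

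I do not expect any genuine obstacle here, as this is a routine grading argument; the only point meriting a sentence of care is the verification that $\sbar_\lambda$ is symmetric, which is what justifies speaking of its Schur expansion at all. Everything else follows formally from the degree count.
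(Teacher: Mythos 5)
Your proposal is correct and takes essentially the same approach as the paper: the paper's proof is a one-line degree count, noting that $\sbar_\lambda$ and $s_\mu$ are homogeneous of degrees $|\lambda|$ and $|\mu|$ respectively, which is exactly your argument. Your additional verifications (that $\sbar_\lambda$ is symmetric and that Schur polynomials of a fixed degree span that homogeneous component) are fine elaborations of the same idea, left implicit in the paper.
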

\begin{proof}
We have that $\sbar_\lambda(x_1,\dots, x_n)$ and $s_\mu(x_1,\dots, x_n)$ are homogeneous polynomials of degree $|\lambda|$ and $|\mu|$, respectively.
\end{proof}

The following proposition will prove useful in later calculations:

\begin{proposition}
\label{prop:boxmult}
\[s_{\mu}(x_1,\dots, x_n) s_{(1)}^k(x_1,\dots, x_n) = \sum_{|\lambda| = |\mu| + k} f^{\lambda/ \mu}s_{\lambda}(x_1,\dots, x_n)\]
\end{proposition}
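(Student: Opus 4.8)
The plan is to reduce the statement to repeated application of the $k=1$ case and then interpret the outcome combinatorially via Young's lattice. First I would record the base case: since $s_{(1)}(x_1,\dots,x_n) = x_1 + \dots + x_n$ is the first complete homogeneous symmetric polynomial, Pieri's rule gives
\[
s_\mu(x_1,\dots,x_n)\, s_{(1)}(x_1,\dots,x_n) = \sum_{\lambda} s_\lambda(x_1,\dots,x_n),
\]
where $\lambda$ runs over all partitions obtained from $\mu$ by adding a single box. If one prefers to avoid citing Pieri, this can be extracted directly from the bialternant formula \eqref{eq:bialt}: writing $e_i$ for the $i$th unit vector and using that $\sum_i x_i$ is symmetric, one has $\left(\sum_i x_i\right)\bigl(\varepsilon\cdot x^{\mu+\delta}\bigr) = \varepsilon\cdot\sum_i x^{\mu+\delta+e_i}$, and $\varepsilon\cdot x^{\alpha}$ vanishes unless the exponent vector $\alpha$ has distinct entries; the surviving terms are precisely $\varepsilon\cdot x^{\lambda+\delta}$ with $\lambda/\mu$ a single box. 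Note that partitions $\lambda$ with more than $n$ rows contribute $s_\lambda = 0$, so this remains correct as an equation of polynomials in $n$ variables.

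Next I would iterate. Applying the base case $k$ times yields
\[
s_\mu\, s_{(1)}^k = \sum_{\mu = \lambda^{(0)} \lessdot \lambda^{(1)} \lessdot \cdots \lessdot \lambda^{(k)}} s_{\lambda^{(k)}},
\]
where the sum ranges over all length-$k$ saturated chains in Young's lattice starting at $\mu$, and $\lessdot$ denotes adding a single box. Collecting terms, the coefficient of a fixed $s_\lambda$ with $|\lambda| = |\mu| + k$ equals the number of such chains from $\mu$ up to $\lambda$. Equivalently, one may run a single induction on $k$: assuming the claim for $k-1$, write $s_\mu s_{(1)}^k = (s_\mu s_{(1)}^{k-1})\,s_{(1)}$, apply the base case to each $s_\nu$, and collect the coefficient of $s_\lambda$, which becomes $\sum_{\nu} f^{\nu/\mu}$ with $\nu$ ranging over partitions obtained by deleting one removable box from $\lambda$.

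The remaining, and only substantive, step is to identify these chains with standard skew tableaux, i.e.\ to show the number of length-$k$ chains from $\mu$ to $\lambda$ equals $f^{\lambda/\mu}$. The bijection labels the unique box of $\lambda^{(i)}/\lambda^{(i-1)}$ with the integer $i$; the condition that every $\lambda^{(i)}$ be a genuine partition is exactly the requirement that the resulting filling of $\lambda/\mu$ increase strictly along rows and down columns, so each chain yields an element of ${\rm SYT}(\lambda/\mu)$, and the construction is visibly invertible. In the inductive phrasing the same fact is the recursion $f^{\lambda/\mu} = \sum_{\nu} f^{\nu/\mu}$, obtained by removing the box containing the maximal entry $k$, which must occupy a removable corner of $\lambda$. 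I do not expect a genuine obstacle here; the only point meriting care is tracking the vanishing of $s_\lambda$ for $\lambda$ with more than $n$ rows, which is harmless since such terms are zero on both sides.
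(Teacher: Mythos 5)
Your proposal is correct and is essentially the paper's own proof: the paper disposes of this proposition in one line as ``repeated application of Pieri's rule,'' which is exactly your iteration of the single-box case followed by the standard identification of saturated chains in Young's lattice with elements of ${\rm SYT}(\lambda/\mu)$. The extra material you supply --- the bialternant derivation of the base case and the careful handling of partitions with more than $n$ rows --- simply fills in details the paper delegates to the citation, so no further comparison is needed.
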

\begin{proof}
This follows from repeated application of Pieri's rule (\cite{stanley_fomin_1999}).
\end{proof}

For a skew partition $\lambda/\mu$, let $\lambda/\mu -e_j$ (respectively $\lambda/\mu - e'_j$) represent $\lambda/\mu$ with the rightmost box in the $j$th row (respectively the lowest box in the $j$th column) removed if such a box exists, or $\lambda/\mu$ otherwise.  Note that this may no longer be a skew partition.  The following is a well-known fact (recall our convention that $f^\alpha=0$ unless $\alpha$ is a valid skew partition):

\begin{proposition}
\label{prop:SYTReduction}
For any skew partition $\lambda/\mu$, \[f^{\lambda/\mu} = \sum_{j} f^{\lambda/\mu-e_j} = \sum_{j} f^{\lambda/\mu-e'_j}\]
\end{proposition}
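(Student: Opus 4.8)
The plan is to establish the first identity $f^{\lambda/\mu} = \sum_j f^{\lambda/\mu - e_j}$ by a bijection that classifies each standard tableau according to the position of its largest entry, and then to obtain the second identity by transposing diagrams. Since $n$ is omitted here, every $T \in {\rm SYT}(\lambda/\mu)$ fills the $N := |\lambda| - |\mu|$ boxes with the values $1, \dots, N$, each used exactly once, and likewise every element of ${\rm SYT}(\lambda/\mu - e_j)$ uses $1,\dots,N-1$ each once.

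First I would locate the box containing the maximal entry $N$ of a given $T \in {\rm SYT}(\lambda/\mu)$. Because entries strictly increase along each row, $N$ must occupy the rightmost box of its row, say row $j$, namely the box in position $(j,\lambda_j)$; in particular that row is nonempty, so $\mu_j < \lambda_j$. I would then check that deleting this box yields a genuine skew shape equal to $\lambda/\mu - e_j$. The key point is to rule out a box directly below: if $(j+1,\lambda_j)$ belonged to $\lambda/\mu$, it would carry an entry exceeding $N$ by strictness along columns, which is impossible. Since $\mu_{j+1}\le \mu_j < \lambda_j$, this forces $\lambda_{j+1} < \lambda_j$, and together with $\mu_j \le \lambda_j - 1$ it follows that the diagram $\lambda'/\mu$ with $\lambda'_j = \lambda_j - 1$ (and $\lambda'_i = \lambda_i$ otherwise) is a valid skew partition, namely $\lambda/\mu - e_j$. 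Thus erasing the box $(j,\lambda_j)$ from $T$ produces an element of ${\rm SYT}(\lambda/\mu - e_j)$.

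The inverse map sends a tableau $U \in {\rm SYT}(\lambda/\mu - e_j)$, for any $j$ with $\lambda/\mu - e_j$ a valid skew partition, to the filling of $\lambda/\mu$ obtained by placing $N$ in the box $(j,\lambda_j)$. Since $N$ exceeds all $N-1$ entries of $U$, and the box $(j,\lambda_j)$ has no neighbor to its right or below (this is exactly what validity of $\lambda/\mu - e_j$ guarantees), the result again lies in ${\rm SYT}(\lambda/\mu)$. These two maps are mutually inverse, and grouping tableaux by the row $j$ in which $N$ lands gives $f^{\lambda/\mu} = \sum_j f^{\lambda/\mu - e_j}$, where the summands corresponding to non-skew diagrams vanish by the convention $f^\alpha = 0$. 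For the second identity I would invoke the transpose (conjugation) bijection $T \mapsto T^{t}$, which carries ${\rm SYT}(\lambda/\mu)$ onto ${\rm SYT}(\lambda^{t}/\mu^{t})$ and interchanges the operations $-e'_j$ and $-e_j$; the column identity for $\lambda/\mu$ then follows from the row identity applied to $\lambda^{t}/\mu^{t}$.

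I expect the only real subtlety to lie in the middle step: verifying that the box holding $N$ is precisely a removable corner lying inside the skew shape, so that its deletion produces one of the valid shapes $\lambda/\mu - e_j$ rather than an illegal diagram. Once that corner analysis is pinned down, the bijection and the transpose argument are both routine.
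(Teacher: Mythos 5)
The paper does not actually prove this proposition; it is stated as a well-known fact (with only a reminder of the convention $f^\alpha = 0$ for non-skew $\alpha$), so there is no argument of the paper's to compare yours against. Your bijective proof is correct and is the standard one: the maximal entry $N$ must occupy a removable corner of the skew shape, deleting it gives the row identity, and conjugation transfers it to the column identity. The corner analysis you flag as the main subtlety is carried out correctly --- a box at $(j+1,\lambda_j)$ would have to hold an entry exceeding $N$, forcing $\lambda_{j+1} < \lambda_j$, which together with $\mu_j \le \lambda_j - 1$ is exactly what validity of $\lambda/\mu - e_j$ requires, and conversely validity of the removal is precisely the condition needed for the inverse map to produce a column-strict filling. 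One pedantic caveat, which is a defect of the paper's conventions rather than of your argument: the paper defines $\lambda/\mu - e_j$ to be $\lambda/\mu$ itself when row $j$ is empty, and under that literal reading such a summand would equal $f^{\lambda/\mu}$ rather than $0$; your grouping argument implicitly (and correctly) treats those terms as absent, since no tableau places $N$ in an empty row, and this is clearly the intended reading of the sum.
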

Finally, we recall the definition of the \emph{Frobenius characteristic map} (see e.g. \cite{macdonald}). This definition will be used in Section 6 when we give a representation-theoretic interpretation of the column case. We recall some basic facts about symmetric functions. The \emph{power sum symmetric function} is defined by
\[
p_r(x_1, x_2, \dots) := x_1^r + x_2^r + \dots
\]
as a formal power series. For a partition $\rho = (\rho_1, \rho_2, \dots, \rho_l) \vdash n$, define $p_\rho := p_{\rho_1}p_{\rho_2} \dots p_{\rho_l}$ to be the power sum symmetric function corresponding to $\rho$

Next, we have the \emph{Schur function}
\[
s_\rho(x_1, x_2, \dots) := \sum_{T \in {\rm SSYT}(\rho)} x^{wt(T)}
\]
Let $z_\rho = \frac{n!}{k_\rho}$, where $k_\rho$ is the number of permutations in $\mathfrak{S}_n$ of cycle type $\rho$.
\begin{definition}
\label{def:frob}
    Let $f$ be the character of a finite-dimensional complex representation $V$ of $\mathfrak{S}_n$. Then the Frobenius characteristic of $f$, written $\text{ch}(f)$ is defined as
    \[
    \text{ch}(f) := \sum_{\rho \vdash n} z^{-1}_{\rho} f(\rho) p_\rho
    \]
    where $f(\rho)$ denotes the value of $f$ on any element of the conjugacy class corresponding to $\rho$. We also use the notation $\text{ch}(V)$ to denote the same symmetric function.
\end{definition}
It is important to note that if $V^\lambda$ is the irreducible representation of $\mathfrak{S}_n$ corresponding to the partition $\lambda$, then 
\[
\text{ch}(V^\lambda) = s_\lambda(x_1, x_2, \dots)
\]
is the corresponding Schur function.

\section{The Column and Row Cases}

\subsection{An alternating formula}

We begin with a lemma which allows us to express the Schur coefficients of $\sbar_\lambda$ as an alternating sum. Let ${\rm Par}_n$ denote the set of partitions with at most $n$ parts.

\begin{lemma}
\label{lemma:alt}
    For $n\in \mathbb{N}$ and $\lambda\in {\rm Par}_n$,
    \begin{align*}
        \sbar_{\lambda}(x_1,\dots, x_n) = \sum_{\nu\subseteq \lambda} (-1)^{|\nu|} s_{\nu}(x_1,\dots, x_n)s_{(1)}^{|\lambda/\nu|}(x_1,\dots, x_n)f^{\lambda/\nu}\frac{\prod_{(r,c)\in \lambda/\nu}(n-r+c)}{|\lambda/\nu|!}
    \end{align*}
    where $(r, c) \in \lambda/\nu$ refers to the ordered pair denoting the row and column index of a box in the skew shape $\lambda/\nu$.
\end{lemma}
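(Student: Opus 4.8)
The plan is to reduce the identity to a statement about how $s_\lambda$ transforms under a simultaneous shift of all variables by a constant, and then to compute that shift explicitly using the bialternant formula \eqref{eq:bialt}. Write $c := s_{(1)}(x_1,\dots,x_n) = x_1 + \dots + x_n$, so that the Chern roots are $c - x_i$. Since $s_\lambda$ is homogeneous of degree $|\lambda|$, we have $\sbar_\lambda(x) = s_\lambda(c - x_1, \dots, c - x_n) = (-1)^{|\lambda|} s_\lambda(x_1 - c, \dots, x_n - c)$, so it suffices to understand the shifted polynomial $G(t) := s_\lambda(x_1 + t, \dots, x_n + t)$ as a function of a formal variable $t$ and then specialize $t = -c$. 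Introducing the constant-coefficient operator $D := \sum_{i=1}^n \partial_{x_i}$, the multivariate Taylor expansion along the direction $(1,\dots,1)$ (a finite sum, since $s_\lambda$ is a polynomial) gives $G(t) = \sum_{k \ge 0} \frac{t^k}{k!} D^k s_\lambda$, so everything reduces to computing the iterated action of $D$ on $s_\lambda$.

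The key computation is the action of $D$ on a single Schur polynomial. Writing $a_\alpha := \varepsilon \cdot x^\alpha = \det(x_i^{\alpha_j})$, I would first observe that $D a_\delta = 0$ (the Vandermonde $a_\delta$ is invariant under $x_i \mapsto x_i + t$, hence killed by the directional derivative $D$), and, expanding $a_\alpha$ as a signed sum of monomials and applying the product rule, that $D a_\alpha = \sum_j \alpha_j\, a_{\alpha - \epsilon_j}$, where $\epsilon_j$ is the $j$th standard basis vector. Applying this with $\alpha = \lambda + \delta$ and dividing by the shift-invariant denominator $a_\delta$ yields $D s_\lambda = \sum_j (\lambda_j + n - j)\, s_{\lambda - \epsilon_j}$, the sum being effectively over those $j$ for which $\lambda - \epsilon_j$ is again a partition (the remaining terms vanish, since $\lambda_j = \lambda_{j+1}$ forces two equal exponents in $a_{\lambda+\delta-\epsilon_j}$). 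The crucial observation is that the coefficient $\lambda_j + n - j$ attached to removing the last box of row $j$ is exactly $n - r + c$ for that box $(r,c) = (j,\lambda_j)$, because its column index is $c = \lambda_j$.

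Iterating, $D^k s_\lambda$ becomes a sum over length-$k$ chains of box removals $\lambda = \mu^{(0)} \supset \mu^{(1)} \supset \dots \supset \mu^{(k)} = \nu$ in Young's lattice, each weighted by the product of the removal coefficients along the chain. Because a box is always removed from its original position $(r,c)$ in $\lambda$, its coefficient $n - r + c$ depends only on the box and not on the step at which it is removed; hence the total weight of every chain from $\lambda$ down to a fixed $\nu$ equals $\prod_{(r,c)\in\lambda/\nu}(n-r+c)$, and the number of such chains is precisely $f^{\lambda/\nu}$, since maximal chains in the interval $[\nu,\lambda]$ biject with standard tableaux of the skew shape $\lambda/\nu$ (this is the content of the recursion in Proposition \ref{prop:SYTReduction}, applied inductively on the last box removed). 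This gives $D^k s_\lambda = \sum_{|\lambda/\nu| = k} f^{\lambda/\nu}\big(\prod_{(r,c)\in\lambda/\nu}(n-r+c)\big) s_\nu$, the sum running over $\nu \subseteq \lambda$ since $D$ only removes boxes.

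Feeding this into the Taylor expansion, substituting $t = -c = -s_{(1)}$, and using $|\lambda/\nu| = |\lambda| - |\nu|$ together with the sign bookkeeping $(-1)^{|\lambda|}(-1)^{|\lambda/\nu|} = (-1)^{2|\lambda| - |\nu|} = (-1)^{|\nu|}$ converts $\sbar_\lambda = (-1)^{|\lambda|}G(-c)$ into exactly the claimed formula. I expect the main obstacle to be the bookkeeping in the third paragraph: showing cleanly that the per-box weights are order-independent and that the ordered removal sequences collapse to the standard-tableau count $f^{\lambda/\nu}$, so that the iterated derivative factors as a product over the boxes of $\lambda/\nu$ times $f^{\lambda/\nu}$ rather than remaining an unwieldy sum over chains. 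The computation of $D$ itself via the bialternant is the other point requiring care, but it is short once the two determinantal identities $D a_\delta = 0$ and $D a_\alpha = \sum_j \alpha_j a_{\alpha-\epsilon_j}$ are in hand.
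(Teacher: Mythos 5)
Your proposal is correct and follows essentially the same route as the paper's proof: Taylor-expand $s_\lambda(x_1+t,\dots,x_n+t)$ in $t$ via the operator $\nabla=\sum_i\partial_{x_i}$, compute $\nabla s_\lambda$ through the bialternant formula to get the corner-removal recursion with weights $n-r+c$, iterate to obtain the factor $f^{\lambda/\nu}\prod_{(r,c)\in\lambda/\nu}(n-r+c)$, and substitute $t=-s_{(1)}$ with the sign bookkeeping from homogeneity. The only cosmetic difference is that you justify $\nabla a_\delta=0$ by shift-invariance of the Vandermonde, whereas the paper notes that $\varepsilon$ annihilates every monomial of $\nabla x^\delta$ because of repeated exponents; both are valid one-line arguments for the same step.
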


\begin{proof}
    First consider $s_\lambda(t+x_1, t+x_2, \dots, t+x_n)$, where $t$ is a new indeterminate. Taking the Taylor series of this function with respect to $t$ about $t=0$, and grouping terms by power of $t$, we get
    \begin{align*}
        s_\lambda(t+x_1, \dots, t+x_n) = \sum_{k=0}^n t^k  \frac{\nabla^k}{k!} (s_\lambda(x_1, \dots, x_n))
    \end{align*}
where we define $\nabla f(x_1, \dots, x_n)\ := \sum_{i=1}^n \frac{\partial f}{\partial x_i}$, and $\nabla^k$ indicates the operator $\nabla$ applied $k$ times. We continue with 

    \begin{Claim}
    \label{deriv}
    \[
    \frac{\nabla^k}{k!}(s_\lambda(x_1, \dots, x_n)) = \sum_{ \genfrac{}{}{0pt}{}{\nu \subseteq \lambda}{|\nu| = |\lambda| - k}} s_\nu(x_1, \dots, x_n) f^{\lambda / \nu} \frac{\prod_{(r, c) \in \lambda / \nu} (n-r+c)}{|\lambda / \nu|!}
    \]
    \end{Claim}

    \begin{proof}
        We first consider the case $k=1$. We use the formula in Equation $\ref{eq:bialt}$ for the Schur polynomial $s_\lambda(x_1, \dots, x_n)$.

        Now note that since $\nabla$ is a symmetric differential operator on the polynomial ring, it commutes with the action of $\varepsilon$. Then by the quotient rule
        \begin{align*}
            \nabla s_\lambda(x_1, \dots, x_n) & = \nabla \left(\frac{\varepsilon \cdot x^{\lambda + \delta}}{\varepsilon \cdot x^\delta} \right) \\
            & = \frac{(\varepsilon \cdot x^\delta)(\nabla(\varepsilon \cdot x^{\lambda + \delta}))-(\varepsilon \cdot x^{\lambda+\delta})(\nabla(\varepsilon \cdot x^\delta))}{(\varepsilon \cdot x^\delta)^2}\\
            & = \frac{(\varepsilon \cdot x^\delta)(\varepsilon \cdot \nabla( x^{\lambda + \delta}))-(\varepsilon \cdot x^{\lambda+\delta})(\varepsilon \cdot \nabla x^\delta)}{(\varepsilon \cdot x^\delta)^2}\\
            & = \frac{\varepsilon \cdot \nabla (x^{\lambda + \delta}) }{\varepsilon \cdot x^\delta}\\
            & = \frac{\varepsilon \cdot \left( \sum_{i=1}^n (\lambda_i + \delta_i) x_1^{\lambda_1+\delta_1}\dots x_i^{\lambda_i + \delta_i - 1}\dots x_n^{\lambda_n + \delta_n}\right)}{\varepsilon \cdot x^\delta}\\
            & = \frac{\varepsilon \cdot \left( \sum_{i=1}^n (\lambda_i + n - i) x_1^{\lambda_1+\delta_1}\dots x_i^{\lambda_i + \delta_i - 1}\dots x_n^{\lambda_n + \delta_n}\right) }{\varepsilon \cdot x^\delta}
        \end{align*}
        In the fourth equality above, $\varepsilon$ annihilates $\nabla x^\delta$. In general $\varepsilon$ will annihilate any term in the numerator which has a repeated exponent. The remaining terms in the last expression are the monomials that correspond to removing an outside corner from the Young diagram of $\lambda$. We conclude that 
        \begin{align*}
            \nabla s_\lambda(x_1, \dots, x_n) = \sum_{\nu=\lambda \setminus \text{(r, c)}} (n-r+c) s_\nu(x_1,\dots, x_n)
        \end{align*}
        where we sum over all $(r, c)$ which are outside corners of $\lambda$. By iterating this argument, we can see that 
        \[
        \nabla^k(s_\lambda(x_1, \dots, x_n)) = \sum_{\genfrac{}{}{0pt}{}{\nu \subseteq \lambda}{|\nu| = |\lambda| - k}} s_\nu(x_1, \dots, x_n) f^{\lambda / \nu} \prod_{(r, c) \in \lambda / \nu} (n-r+c)
        \]
        where the factor of $f^{\lambda / \nu}$ is the number of ways of removing $k$ outside corners from $\lambda$ in succession. Dividing both sides by $k!$ completes the proof of the claim.
    \end{proof}
    The proof of the lemma follows by substituting $t = -s_{(1)}(x_1, \dots, x_n)$ and multiplying through by $(-1)^{|\lambda|}$.
\end{proof}

We have the following corollary, which will prove useful in the next two subsections.

\begin{corollary}
\label{cor:alt}
    For $n\in \mathbb{N}$ and $\lambda,\mu\in {\rm Par}_n$,
    \begin{align*}
        [s_{\mu}]\sbar_{\lambda}(x_1,\dots, x_n) &= \sum_{\nu\subseteq \lambda\cap \mu} (-1)^{|\nu|} \frac{f^{\lambda/\nu}f^{\mu/\nu}\prod_{(r,c)\in \lambda/\nu}(n-r+c) }{|\lambda/\nu|!}\\
        &= \sum_{\nu\subseteq \lambda\cap \mu} (-1)^{|\nu|} \binom{|\lambda|}{|\nu|}f^{\mu/\nu}\frac{f^\nu f^{\lambda/\nu}}{f^\lambda}\frac{|{\rm SSYT}(\lambda,n)|}{|{\rm SSYT}(\nu,n)|}
    \end{align*}
\end{corollary}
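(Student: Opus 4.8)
The plan is to extract the coefficient of $s_\mu$ from both sides of the identity in Lemma \ref{lemma:alt}. Since coefficient extraction is linear, it suffices to compute $[s_\mu]\bigl(s_\nu(x_1,\dots,x_n)\,s_{(1)}^{|\lambda/\nu|}(x_1,\dots,x_n)\bigr)$ for each $\nu\subseteq\lambda$. By Observation \ref{prop:degree} I may assume $|\mu|=|\lambda|$, as otherwise both sides vanish; in particular $|\lambda/\nu| = |\lambda|-|\nu|$, so the number of boxes added by the $s_{(1)}$ factors is exactly $k := |\lambda/\nu|$ and $|\mu| = |\nu| + k$. Applying Proposition \ref{prop:boxmult} with inner partition $\nu$ and this value of $k$, the product $s_\nu\,s_{(1)}^{|\lambda/\nu|}$ expands as $\sum_{|\alpha|=|\lambda|} f^{\alpha/\nu}\,s_\alpha$, so the coefficient of $s_\mu$ is precisely $f^{\mu/\nu}$. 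Substituting back gives
\[
[s_\mu]\sbar_\lambda(x_1,\dots,x_n) = \sum_{\nu\subseteq\lambda} (-1)^{|\nu|}\, f^{\mu/\nu}\, f^{\lambda/\nu}\,\frac{\prod_{(r,c)\in\lambda/\nu}(n-r+c)}{|\lambda/\nu|!}.
\]

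To recover the first displayed equality, I would invoke the convention that $f^{\mu/\nu}=0$ unless $\nu\subseteq\mu$. Thus the only surviving terms are those with $\nu\subseteq\lambda$ and $\nu\subseteq\mu$, i.e.\ $\nu\subseteq\lambda\cap\mu$, and I may restrict the index set of the sum to $\lambda\cap\mu$ without changing its value. This yields the first line verbatim.

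For the second equality, the task is purely to rewrite the single factor $\frac{f^{\lambda/\nu}\prod_{(r,c)\in\lambda/\nu}(n-r+c)}{|\lambda/\nu|!}$ in the target form $\binom{|\lambda|}{|\nu|}\frac{f^\nu f^{\lambda/\nu}}{f^\lambda}\frac{|{\rm SSYT}(\lambda,n)|}{|{\rm SSYT}(\nu,n)|}$, since the factor $f^{\mu/\nu}$ and the sign $(-1)^{|\nu|}$ are common to both expressions. The two ingredients I would use are the hook length formula $f^\kappa = |\kappa|!/\prod_{\square\in\kappa}h(\square)$ and the hook content formula $|{\rm SSYT}(\kappa,n)| = s_\kappa(1^n) = \prod_{(r,c)\in\kappa}(n-r+c)\big/\prod_{\square\in\kappa}h(\square)$. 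Combining them eliminates the hook products and gives, for any partition $\kappa$, the clean identity $|{\rm SSYT}(\kappa,n)| = \tfrac{f^\kappa}{|\kappa|!}\prod_{(r,c)\in\kappa}(n-r+c)$. Taking the ratio of this for $\kappa=\lambda$ and $\kappa=\nu$, and using $\nu\subseteq\lambda$ to telescope the content products into a product over $\lambda/\nu$, I obtain
\[
\frac{|{\rm SSYT}(\lambda,n)|}{|{\rm SSYT}(\nu,n)|} = \frac{f^\lambda}{f^\nu}\cdot\frac{|\nu|!}{|\lambda|!}\cdot\prod_{(r,c)\in\lambda/\nu}(n-r+c).
\]
Substituting this into the target expression and writing $\binom{|\lambda|}{|\nu|}=|\lambda|!/(|\nu|!\,|\lambda/\nu|!)$, the factors $f^\lambda$, $f^\nu$, and $|\lambda|!/|\nu|!$ all cancel, leaving exactly $\frac{f^{\lambda/\nu}\prod_{(r,c)\in\lambda/\nu}(n-r+c)}{|\lambda/\nu|!}$, which matches the factor from the first line.

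The main obstacle here is bookkeeping rather than any conceptual difficulty: I must track that the content product $\prod(n-r+c)$ telescopes correctly across the skew shape (which genuinely uses $\nu\subseteq\lambda$, guaranteed since $\nu\subseteq\lambda\cap\mu$) and that $|\lambda/\nu|!$ is consistently identified with $(|\lambda|-|\nu|)!$ throughout. Once Proposition \ref{prop:boxmult}, the hook length formula, and the hook content formula are in hand, every step is a routine cancellation.
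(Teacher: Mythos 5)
Your proposal is correct and is essentially the paper's own argument: the first equality is Proposition \ref{prop:boxmult} applied term-by-term to Lemma \ref{lemma:alt} (with the convention $f^{\mu/\nu}=0$ for $\nu\not\subseteq\mu$ cutting the sum down to $\nu\subseteq\lambda\cap\mu$), and the second equality is the hook length formula combined with the hook content formula, whose cancellation you carry out explicitly. One small caveat: when $|\mu|\neq|\lambda|$ only the left-hand side is guaranteed to vanish (by Observation \ref{prop:degree}), not the right-hand side, so your parenthetical claim that ``both sides vanish'' is inaccurate --- the identity really carries the implicit hypothesis $|\lambda|=|\mu|$, exactly as in the paper's statement, and this is harmless since the corollary is only ever invoked in that case.
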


\begin{proof}
The first equality follows from Proposition \ref{prop:boxmult} applied to each term in Lemma \ref{lemma:alt}. The second equality follows from applying both the hook length formula for standard Young tableaux and the hook content formula for semistandard Young tableaux (see \cite{stanley_fomin_1999}).

\end{proof}

\subsection{The column case}

Denote by $P(\lambda,n)$ the set of elements of ${\rm SYT}(\lambda,n)$ such that the smallest label not appearing in the first column is odd. We call the elements of $P(\lambda, n)$ \emph{parity tableaux}. The following theorem gives a combinatorial interpretation for the Schur coefficients of $\sbar_{(1^k)}(x_1,\dots, x_n)$, generalizing Proposition 2.3 in \cite{reinerwebb} to the cases $k \leq n$.

\begin{theorem}
\label{thm:columncase}
\begin{align*}
    \sbar_{(1^k)}(x_1,\dots, x_n) = \sum_{|\mu| = k} |P(\mu,n)| s_\mu(x_1,\dots, x_n)
\end{align*}
\end{theorem}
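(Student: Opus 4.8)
The plan is to compute the coefficient $[s_\mu]\sbar_{(1^k)}$ directly from Corollary \ref{cor:alt} and then to reorganize the resulting alternating sum into a count of parity tableaux by a sign-cancellation argument. By Observation \ref{prop:degree} I may assume $|\mu| = k$; let $\ell$ denote the number of rows of $\mu$, so $\ell \le k$.

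First I would specialize Corollary \ref{cor:alt} to $\lambda = (1^k)$. The only subpartitions $\nu \subseteq (1^k)\cap\mu$ are the columns $\nu = (1^j)$ for $0 \le j \le \ell$, and for each of these the skew shape $(1^k)/(1^j)$ is a vertical strip, so $f^{(1^k)/(1^j)} = 1$ and the content product telescopes:
$$\frac{\prod_{(r,c)\in (1^k)/(1^j)}(n-r+c)}{|(1^k)/(1^j)|!} = \frac{(n-j)!}{(n-k)!\,(k-j)!} = \binom{n-j}{k-j}.$$
This yields $[s_\mu]\sbar_{(1^k)} = \sum_{j=0}^{\ell} (-1)^j f^{\mu/(1^j)}\binom{n-j}{k-j}$.

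Second, I would reinterpret each summand as the cardinality of a concrete set of tableaux. The key identity is $f^{\mu/(1^j)}\binom{n-j}{k-j} = f^{\mu/(1^j),\,n-j}$, since choosing which $k-j$ values of $[n-j]$ to use and then taking the unique increasing relabelling turns a standard filling into a filling from $[n-j]$. I would then exhibit a bijection between ${\rm SYT}(\mu/(1^j), n-j)$ and the set $A_j := \{T \in {\rm SYT}(\mu,n) : T(i,1) = i \text{ for } 1 \le i \le j\}$, obtained by adding $j$ to every entry and stacking $1,2,\dots,j$ on top of the first column; the reverse map deletes these top $j$ cells and subtracts $j$, using that the surviving entries of a tableau in $A_j$ all lie in $\{j+1,\dots,n\}$. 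Hence $[s_\mu]\sbar_{(1^k)} = \sum_{j=0}^{\ell}(-1)^j |A_j|$.

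Third comes the sign cancellation, which I expect to be the conceptual crux. For $T \in {\rm SYT}(\mu,n)$ let $d(T)$ be the length of the maximal initial run $1,2,\dots,d$ appearing at the top of the first column; then $T \in A_j$ if and only if $d(T)\ge j$, so writing $N_d$ for the number of tableaux with $d(T) = d$ gives $|A_j| = \sum_{d\ge j} N_d$. Interchanging the order of summation,
$$\sum_{j=0}^{\ell}(-1)^j |A_j| = \sum_{d=0}^{\ell} N_d \sum_{j=0}^{d}(-1)^j = \sum_{d\text{ even}} N_d,$$
since the inner alternating sum is $1$ when $d$ is even and $0$ when $d$ is odd. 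Finally I would observe that the smallest label absent from the first column of $T$ is exactly $d(T)+1$ — the entries $1,\dots,d(T)$ occupy the top of the strictly increasing first column, while $d(T)+1$ does not appear — so the defining condition of $P(\mu,n)$, that this smallest absent label be odd, is equivalent to $d(T)$ being even. Thus $\sum_{d\text{ even}} N_d = |P(\mu,n)|$, completing the identification. The main obstacle is setting up the bijection of the second step and isolating the run-length statistic $d(T)$ of the third cleanly enough that the telescoping and the parity condition line up; once $d(T)$ is identified as the correct statistic, the alternating sum collapses immediately.
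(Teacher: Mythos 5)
Your proposal is correct and follows essentially the same route as the paper: both specialize Corollary \ref{cor:alt} to obtain $\sum_{j}(-1)^j\binom{n-j}{k-j}f^{\mu/(1^j)}$, reinterpret each term as counting tableaux in ${\rm SYT}(\mu,n)$ whose first column begins $1,\dots,j$, and cancel signs to isolate the tableaux whose smallest label missing from the first column is odd. The only cosmetic differences are that you derive the binomial coefficient from the content-product form of the corollary rather than the hook-length/hook-content form, and you organize the cancellation by interchanging summation over the run-length statistic $d(T)$ rather than pairing consecutive terms as the paper does.
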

\begin{proof}
We first apply Corollary \ref{cor:alt}. Note that if $\nu \subseteq \lambda \cap \mu$, then $\nu$ is a column, say of length $i$. Then the terms $f^{\lambda / \nu}$, $f^{\nu}$, and $f^{\lambda}$ are all equal to $1$.

Next, we have $|{\rm SSYT}(\lambda, n)| = \binom{n}{k}$, $|{\rm SSYT}(\nu, n)| = \binom{n}{i}$, and $\binom{|\lambda|}{|\nu|} = \binom{k}{i}$. So we have $\binom{k}{i} \cdot \binom{n}{k} / \binom{n}{i} = \binom{n-i}{n-k}$. We conclude that
\begin{align*}
    [s_\mu]\sbar_{(1^k)}(x_1, \dots, x_n) = \sum_{i=0}^k (- 1)^i \binom{n-i}{k-i} f^{\mu/ (1^i)}
\end{align*}

Note that $\binom{n-i}{k-i} f^{\mu/ (1^i)}$ counts the elements of ${\rm SYT}(\mu/ (1^i),n-i)$ - or equivalently, elements of ${\rm SYT}(\mu,n)$ such that the numbers from $1$ through $i$ are all in the leftmost column. 

This implies that 
\[
\binom{n-2j}{k-2j} f^{\mu/ (1^{2j})}- \binom{n-2j-1}{k-2j-1} f^{\mu/ (1^{2j+1})}
\]
counts the number of elements of ${\rm SYT}(\mu,n)$ such that the numbers from $1$ through $2j$ are all in the first column, but $2j+1$ is not, so $2j+1$ is the smallest number not appearing in the first column. 

As a result, summing over all $i$, we get that $\sum_{i=0}^k (-1)^i \binom{n-i}{k-i} f^{\mu/(1^i)}$ is the number of elements of ${\rm SYT}(\mu,n)$ such that the smallest number number not appearing in the first column is odd. This recovers the definition of $P(\mu,n)$, so we are done.
\end{proof}

\subsection{The row case}
We now turn to examining the case $\sbar_{(k)}(x_1, \dots, x_n)$. This case will prove to be somewhat more complex. Given a partition $\mu$ and an integer $p$ such that $p+|\mu| \geq 0$, we define a new partition $\mu^{(p)}$ as follows: Let 
\begin{equation*}
    a = \min\{i\in \mathbb{N}: p\geq \mu'_i-i\}
\end{equation*}
Then the $i$th column of $\mu^{(p)}$ is given by:
\[(\mu^{(p)})_i' = \begin{cases} 
      \mu_i'-1 & i<a \\
      p+a-1 & i=a \\
      \mu_{i-1}' & i>a 
   \end{cases}
\]
We proceed with the following lemma:
\begin{lemma}
\label{thm:rowcasegeneral}
Fix a partition $\mu$ and $p\in \mathbb{Z}$ such that $p+|\mu|\geq 0$.
Then \[ (-1)^{a+1}f^{\mu^{(p)}} = \sum_{i=0}^{|\mu|} (-1)^i \binom{p + |\mu|}{p + i}f^{\mu/ (i)}\]
In particular, if $p = \mu'_a - a$, then 
\[ 0 = \sum_{i=0}^{|\mu|} (-1)^i \binom{p + |\mu|}{p + i}f^{\mu/ (i)}.\]
\end{lemma}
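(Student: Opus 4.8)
The plan is to turn the alternating sum on the right-hand side into the cofactor expansion of one big determinant, and then to \emph{straighten} that determinant via the classical (Aitken/Jacobi--Trudi) determinant formula for standard Young tableaux. Throughout I work with the conjugate $\lambda := \mu'$, which has $\ell$ parts, and write $N := |\mu| = |\lambda|$. Since conjugation is a bijection on tableaux, $f^{\mu/(i)} = f^{\lambda/(1^i)}$, and the terms with $i > \ell$ vanish, so the identity to prove is
\[\sum_{i=0}^{\ell}(-1)^i\binom{p+N}{p+i}f^{\lambda/(1^i)} = (-1)^{a+1}f^{(\mu^{(p)})'} = (-1)^{a+1}f^{\mu^{(p)}}.\]
The first step is to apply the determinant formula $f^{\lambda/(1^i)}/(N-i)! = \det_{1\le r,s\le \ell}\bigl(1/(\lambda_r - (1^i)_s - r + s)!\bigr)$. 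Writing $u_s := \bigl(1/(\lambda_r - r + s)!\bigr)_{r=1}^{\ell}$ for the candidate column vectors ($s = 0,\dots,\ell$), the crucial observation is that subtracting $1$ in the $s$-th entry merely shifts a column's index down by one: the matrix for a fixed $i$ has columns $u_0,u_1,\dots,u_{i-1},u_{i+1},\dots,u_\ell$. Thus it is exactly the maximal minor $D_i$ of the $\ell\times(\ell+1)$ matrix $[\,u_0\mid\cdots\mid u_\ell\,]$ obtained by deleting column $u_i$, and $\binom{p+N}{p+i}f^{\lambda/(1^i)} = \tfrac{(p+N)!}{(p+i)!}\,D_i$.

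The second step is to recognize $\sum_{i=0}^{\ell}(-1)^i\tfrac{1}{(p+i)!}D_i$ as a Laplace expansion along the top row of the $(\ell+1)\times(\ell+1)$ matrix obtained by stacking the extra row $\bigl(1/(p+s)!\bigr)_{s=0}^{\ell}$ on top of $[\,u_0\mid\cdots\mid u_\ell\,]$. This collapses the whole left-hand side into a single determinant,
\[\sum_{i=0}^{\ell}(-1)^i\binom{p+N}{p+i}f^{\lambda/(1^i)} = (p+N)!\,\det\widetilde M,\qquad \widetilde M := \Bigl(\tfrac{1}{(c_r + s)!}\Bigr)_{r,s=0}^{\ell},\]
where $c_0 := p$ and $c_r := \lambda_r - r$ for $r\ge 1$. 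So the entire sum is an Aitken-type determinant whose row parameters are $c_0,c_1,\dots,c_\ell$.

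The final step is to straighten $\widetilde M$. Its determinant is alternating in the row parameters $c_r$, so I sort $(c_0,\dots,c_\ell)$ into strictly decreasing order. Since $c_1 > c_2 > \cdots > c_\ell$ already and $a = \min\{i : \lambda_i - i \le p\}$ records exactly where $c_0 = p$ lands (namely $c_{a-1} > p \ge c_a$), sorting moves $c_0$ past the $a-1$ larger entries, contributing the sign $(-1)^{a-1} = (-1)^{a+1}$. If $p = \lambda_a - a = \mu'_a - a$ then $c_0 = c_a$, two rows coincide, and $\det\widetilde M = 0$ --- this is precisely the ``in particular'' case. Otherwise the $c_r$ are distinct, the sorted matrix is the Aitken determinant $f^{\kappa}/|\kappa|!$ of the partition with parts $\kappa_i = (\text{$i$-th largest }c) + (i-1)$, and a direct substitution gives $\kappa = (\lambda_1 - 1,\dots,\lambda_{a-1}-1,\,p+a-1,\,\lambda_a,\dots,\lambda_\ell) = (\mu^{(p)})'$, with $|\kappa| = p + N$. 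Combining, $(p+N)!\det\widetilde M = (-1)^{a+1}f^{\kappa} = (-1)^{a+1}f^{\mu^{(p)}}$.

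The two determinant formulas and the index bookkeeping are routine; the main obstacle --- and the step I would check most carefully --- is the straightening: confirming that the sign is \emph{exactly} $(-1)^{a+1}$ and that the sorted shifted parts reassemble into $(\mu^{(p)})'$ rather than an off-by-one variant. Pinning down the factorial conventions on negative arguments (so that invalid shapes contribute $0$) and verifying $|\kappa| = p+N$, so that the prefactor $(p+N)!$ cancels $|\kappa|!$ and no spurious constant survives, are the places where an error is most likely to hide.
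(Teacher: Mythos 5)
Your proposal is correct in substance and takes a genuinely different route from the paper. The paper proves the identity by induction on $|\mu|+p$: it applies Pascal's rule to the binomial coefficients and the branching rule $f^{\mu/(i)}=\sum_j f^{(\mu/(i))-e_j'}$ (Proposition \ref{prop:SYTReduction}) to reassemble the sum from the inductive hypotheses for $(\mu,p-1)$ and the $(\mu-e_j',p)$, with a delicate case split according to whether $p=\mu'_{a}-a$. You instead collapse the whole alternating sum in one shot: Aitken's determinant formula turns each $\binom{p+N}{p+i}f^{\lambda/(1^i)}$ (with $\lambda=\mu'$) into $\tfrac{(p+N)!}{(p+i)!}$ times a maximal minor of the $\ell\times(\ell+1)$ matrix $[u_0\mid\cdots\mid u_\ell]$, the signed sum of minors is a Laplace expansion of a bordered $(\ell+1)\times(\ell+1)$ determinant with row parameters $p,\lambda_1-1,\dots,\lambda_\ell-\ell$, and row-sorting yields both the sign $(-1)^{a+1}$ and the shape $(\mu^{(p)})'$ at once; the degenerate case $p=\mu'_a-a$ becomes the transparent statement that a determinant with two equal rows vanishes. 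Your verifications of the bookkeeping (the sign $(-1)^{a-1}=(-1)^{a+1}$, the reassembled parts $\kappa=(\mu^{(p)})'$, and $|\kappa|=p+N$) all check out. What each approach buys: the paper's induction is elementary and stays entirely inside SYT combinatorics, while yours is shorter, avoids induction, and explains conceptually where the sign and the modified shape $\mu^{(p)}$ come from. One edge case deserves explicit mention in a final write-up: when $p<-\ell-1$ (possible under the hypothesis $p+|\mu|\geq 0$, e.g.\ $\mu=(1,1,1)$, $p=-3$), the value $a$ exceeds $\ell+1$ and there are \emph{not} $a-1$ larger row parameters to sort past, so the sign argument does not literally apply; but then every entry $1/(p+s)!$ of the bordered row is zero, so the determinant vanishes, and $\mu^{(p)}$ fails to be a partition, so both sides are $0$ and the identity still holds. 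This is exactly the kind of negative-factorial convention issue you flagged, and it is fixable in one sentence rather than a genuine gap.
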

\begin{proof}
By the definition of $a$, we have that the sequence $(\mu'_i - i)_{i\in \mathbb{N}}$ is strictly decreasing, and $\mu^{(p)}$ is a partition if and only if $p > \mu'_a - a$. We proceed by induction on $|\mu|+p$.  In the base case, $|\mu|+p=0$, which implies that
\[
\sum_{i=0}^{|\mu|} (-1)^i \binom{p + |\mu|}{p + i}f^{\mu/ (i)} = (-1)^{|\mu|}f^{\mu/ (|\mu|)}
\]
If $\mu\not= (|\mu|)$, then $\mu'_{|\mu|} = 0$ so we have that $p  = -|\mu| = -|\mu| +0 = -|\mu| + \mu'_{|\mu|}$ which implies that $a = |\mu|$, and so $(\mu^{(-|\mu|)})'_{|\mu|} = p+a-1 = -1$, so $\mu^{(-|\mu|)}$ is not a partition, so $f^{\mu^{(-|\mu|)}} = 0$.  Similarly, if $\mu \not= (|\mu|)$, then $f^{\mu/ (|\mu|)} = 0$, so $\sum_{i=0}^{|\mu|} (-1)^i \binom{p + |\mu|}{p + i}f^{\mu/ (i)} =0 = f^{\mu^{(-|\mu|)}}$, completing the base case in this case.

On the other hand, if $\mu = (|\mu|)$, then $\mu'_{|\mu|} - |\mu| = 1-|\mu| > p > -|\mu| - 1 = \mu'_{|\mu|+1} - (|\mu|+1)$, so $a = |\mu|+1$.  Therefore,  $(|\mu|)^{(-|\mu|)} = (0)$, and indeed
\begin{align*}
    \sum_{i=0}^{|\mu|} (-1)^i \binom{p + |\mu|}{p + i}f^{(|\mu|)/ (i)} & = (-1)^{|\mu|}f^{(|\mu|)/ (|\mu|)} \\
    & = (-1)^{|\mu|} \\
    & = (-1)^{|\mu|}f^{(0)} = (-1)^{-|\mu|-1+1}f^{(|\mu|)^{(-|\mu|)}}
\end{align*}

which completes the proof of the base case.

For the induction step, assume $p+|\mu|>0$, and assume for our inductive hypothesis that the statement is true for smaller values of $p + |\mu|$.  Using Proposition \ref{prop:SYTReduction} and a well-known identity of binomial coefficients, we have that 
\begin{align*}
    \sum_{i=0}^{|\mu|} (-1)^i \binom{p + |\mu|}{p + i}f^{\mu/ (i)} 
    &= \sum_{i=0}^{|\mu|} (-1)^i \left(\binom{p + |\mu|-1}{p + i-1} + \binom{p + |\mu|-1}{p + i}\right)f^{\mu/ (i)}\\
    &= \sum_{i=0}^{|\mu|} (-1)^i \binom{p + |\mu|-1}{p + i-1} f^{\mu/ (i)} + \sum_{i=0}^{|\mu|} (-1)^i  \binom{p + |\mu|-1}{p + i}f^{\mu/ (i)}\\
    &= \sum_{i=0}^{|\mu|} (-1)^i \binom{(p-1) + |\mu|}{( p -1) + i} f^{\mu/ (i)} \\&
    \hspace{1cm}+\sum_{i=0}^{|\mu|-1} (-1)^i  \binom{p + (|\mu|-1)}{p + i}f^{\mu/ (i)}\\
    &= \sum_{i=0}^{|\mu|} (-1)^i \binom{(p-1) + |\mu|}{( p -1) + i} f^{\mu/ (i)} \\& \hspace{1cm}+\sum_{i=0}^{|\mu|-1} (-1)^i \binom{p + (|\mu|-1)}{p + i} \left(\sum_{j=1}^\infty f^{(\mu/ (i))-e'_j}\right)\\
    &= \sum_{i=0}^{|\mu|} (-1)^i \binom{(p-1) + |\mu|}{( p -1) + i} f^{\mu/ (i)} \\& \hspace{1cm}+ \sum_{j=1}^\infty\sum_{i=0}^{|\mu|-1} (-1)^i  \binom{p + (|\mu|-1)}{p + i}f^{(\mu/ (i))-e'_j}\\
    &= (-1)^{a(\mu,p-1)+1} f^{\mu^{(p-1)}} + \sum_{j=1}^\infty (-1)^{a(\mu-e_j,p)+1} f^{(\mu-e'_j)^{(p)}}
\end{align*}
where we define $a(\mu,p) = \min\{i\in \mathbb{N}: p\geq \mu'_i-i\}$, and in the last step we have used the inductive hypothesis.  We break this up into two cases, depending on whether or not $p= \mu'_{a(\mu,p)}-a(\mu,p)$.

If $p= \mu'_{a(\mu,p)}-a(\mu,p)$, then by construction $a(\mu,p-1) = a(\mu,p) +1$, and for all $j$ such that $\mu-e'_j$ is a partition, $a(\mu-e'_j,p) = a(\mu,p) $.  Additionally, observe that for all $j \not= a(\mu,p)$, $f^{\mu^{(p)}} = f^{(\mu-e'_j)^{(p)}} = 0$, and $\mu^{(p-1)} = (\mu-e'_{a(\mu,p)})^{(p)}$. As a result, we have that 
\begin{align*}
    \sum_{i=0}^{|\mu|} (-1)^i \binom{p + |\mu|}{p + i}f^{\mu/ (i)} &= (-1)^{a(\mu,p-1)+1} f^{\mu^{(p-1)}} + \sum_{j=1}^\infty (-1)^{a(\mu-e'_j,p)+1} f^{(\mu-e'_j)^{(p)}}\\
    &= (-1)^{a(\mu,p)} (f^{\mu^{(p-1)}} -  f^{(\mu-e'_{a(\mu,p)})^{(p)}}) \\
    & = (-1)^{a(\mu,p)} (f^{\mu^{(p-1)}} -  f^{\mu^{(p-1)}})\\
    &= 0 \\
    & = (-1)^{a(\mu,p)+1}f^{\mu^{(p)}},
\end{align*}
completing the induction in this case.

Finally, we consider the case where $p\not= \mu'_{a(\mu,p)}-a(\mu,p)$. This means that \\$p> \mu'_{a(\mu,p)}-a(\mu,p)$, and in particular $a(\mu,p-1) = a(\mu,p)$, with $\mu^{(p-1)} = \mu^{(p)} - e'_{a(\mu,p)}$.  For all $j$, either $a(\mu-e'_j,p) = a(\mu,p)$, or $a(\mu-e'_j,p)=a(\mu,p)-1$.  In the former case, $(\mu-e'_j)^{(p)} = \mu^{(p)} - e'_j$ for $j< a(\mu,p)$ and $(\mu-e'_j)^{(p)} = \mu^{(p)} - e'_{j+1}$ for $j\geq a(\mu,p)$.  On the other hand, if $a(\mu-e'_j,p)=a(\mu,p)-1$, then $j=a(\mu,p)-1$, with $p = \mu'_j - j - 1$, so in this case $(\mu-e'_j)^{(p)}$ is not a partition, so $f^{(\mu-e'_j)^{(p)}} = 0 = f^{\mu^{(p)}-e'_j}$.  Combining these two cases, we see that $\sum_j f^{(\mu-e'_j)^{(p)}} = \sum_{j\not= a(\mu,p)} f^{\mu^{(p)}-e'_j}$.  As a result,
\begin{align*}
    \sum_{i=0}^{|\mu|} (-1)^i \binom{p + |\mu|}{p + i}f^{\mu/ (i)} &= (-1)^{a(\mu,p-1)+1} f^{\mu^{(p-1)}} + \sum_{j=1}^\infty (-1)^{a(\mu-e_j,p)+1} f^{(\mu-e'_j)^{(p)}}\\
    &= (-1)^{a(\mu,p)+1} f^{\mu^{(p-1)}} + \sum_{j=1}^\infty (-1)^{a(\mu,p)+1} f^{(\mu-e'_j)^{(p)}}\\
    &= (-1)^{a(\mu,p)+1} f^{\mu^{(p)}-e'_{a(\mu,p)}} + \sum_{j\not= a(\mu,p)}(-1)^{a(\mu,p)+1} f^{\mu^{(p)}-e'_j}\\
    &= \sum_{j}(-1)^{a(\mu,p)+1} f^{\mu^{(p)}-e'_j}\\
    &= (-1)^{a(\mu,p)+1} f^{\mu^{(p)}}
\end{align*}
completing the proof by induction in this case.
\end{proof}
We now have the tools we need to provide a combinatorial interpretation for the Schur expansion in the row case.
\begin{theorem}
\label{thm:rowcase}
\begin{align*}
    \sbar_{(k)}(x_1,\dots, x_n) = \sum_{\substack{|\mu| = k\\ \ell(\mu)< n}} f^{\mu + (1^{n-1})} s_{\mu}(x_1,\dots, x_n)
\end{align*}
\end{theorem}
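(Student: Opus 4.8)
The plan is to reduce everything to the alternating formula in Corollary \ref{cor:alt} and then to invoke Lemma \ref{thm:rowcasegeneral} for a single well-chosen value of $p$. First I would set $\lambda = (k)$ in the first line of Corollary \ref{cor:alt}. Any $\nu \subseteq (k) \cap \mu$ is a single row $\nu = (i)$ with $0 \le i \le \mu_1$, and a skew row admits a unique standard filling, so $f^{(k)/(i)} = 1$. The boxes of $(k)/(i)$ lie in row $1$, columns $i+1, \dots, k$, so $\prod_{(r,c) \in (k)/(i)}(n - r + c) = (n+i)(n+i+1)\cdots(n+k-1) = (n+k-1)!/(n+i-1)!$. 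Dividing by $|(k)/(i)|! = (k-i)!$ collapses this product to a single binomial coefficient, giving
\[
[s_\mu]\sbar_{(k)}(x_1, \dots, x_n) = \sum_{i=0}^{k} (-1)^i \binom{n+k-1}{k-i} f^{\mu/(i)}.
\]

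Next I would match this against Lemma \ref{thm:rowcasegeneral}. Using $|\mu| = k$ together with the symmetry of binomial coefficients, $\binom{n+k-1}{k-i} = \binom{(n-1)+|\mu|}{(n-1)+i}$, so the sum above is exactly the alternating sum appearing in the lemma with $p = n-1$. Therefore
\[
[s_\mu]\sbar_{(k)}(x_1, \dots, x_n) = (-1)^{a+1} f^{\mu^{(n-1)}}, \qquad a = \min\{i \in \mathbb{N} : n - 1 \ge \mu'_i - i\}.
\]
All that then remains is to identify the shape $\mu^{(n-1)}$ and pin down the sign $(-1)^{a+1}$.

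This last identification is where the real bookkeeping lives, and it is the step I expect to be the main obstacle: unwinding the somewhat opaque definition of $\mu^{(p)}$ into the clean answer $\mu + (1^{n-1})$. Since Corollary \ref{cor:alt} computes coefficients only for $\mu \in {\rm Par}_n$, I would split on whether $\ell(\mu) < n$ or $\ell(\mu) = n$. In both cases the condition for $i=1$ reads $\mu'_1 - 1 = \ell(\mu) - 1 \le n - 1$, which holds, so $a = 1$ and hence $(-1)^{a+1} = +1$. When $\ell(\mu) < n$ one has $\mu'_a - a = \ell(\mu) - 1 < n - 1 = p$, so $\mu^{(n-1)}$ is a genuine partition; reading off the column-length recipe with $a = 1$ gives $(\mu^{(n-1)})'_1 = p + a - 1 = n - 1$ and $(\mu^{(n-1)})'_j = \mu'_{j-1}$ for $j \ge 2$, which is precisely the conjugate of $\mu + (1^{n-1})$ (the partition obtained by prepending a column of height $n-1$ to $\mu$). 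Thus the coefficient equals $f^{\mu + (1^{n-1})}$, as claimed.

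Finally I would dispatch the boundary case $\ell(\mu) = n$, which must yield coefficient $0$ even though $s_\mu$ does not vanish identically. Here $p = n - 1 = \mu'_1 - 1 = \mu'_a - a$, so the ``in particular'' clause of Lemma \ref{thm:rowcasegeneral} gives that the alternating sum is $0$; equivalently, $\mu^{(n-1)}$ fails to be a partition since $p \not> \mu'_a - a$, so $f^{\mu^{(n-1)}} = 0$. This confirms that only the partitions with $\ell(\mu) < n$ contribute, exactly matching the range of the sum in the statement, and completes the proof.
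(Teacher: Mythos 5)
Your proof is correct and follows essentially the same route as the paper: apply Corollary \ref{cor:alt} with $\lambda = (k)$ to obtain $\sum_{i=0}^k (-1)^i \binom{n+k-1}{n+i-1} f^{\mu/(i)}$, then invoke Lemma \ref{thm:rowcasegeneral} with $p = n-1$, splitting on $\ell(\mu) < n$ versus $\ell(\mu) = n$. The only differences are cosmetic: you use the first form of Corollary \ref{cor:alt} rather than the second (they give the same binomial), and you spell out the identification $a = 1$, $(-1)^{a+1} = +1$, and $\mu^{(n-1)} = \mu + (1^{n-1})$, which the paper's proof leaves implicit.
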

\begin{proof}
We again use Corollary \ref{cor:alt}. Note that if $\nu \subseteq \lambda \cap \mu$, then $\nu$ is a row, say of length $i$.

Then we have that $f^{\lambda / \nu}$, $f^{\nu}$, and $f^{\lambda}$ are all equal to $1$, and $\binom{|\lambda|}{|\nu|} = \binom{k}{i}$, $|{\rm SSYT} (\lambda, n)| = \binom{n+k-1}{k}$, and $|{\rm SSYT} (\nu, n)| = \binom{n+i-1}{i}$. We conclude that
\begin{equation}
\label{eq:rowprelim}
    [s_\mu]\sbar_{(k)}(x_1, \dots, x_n) = \sum_{i=0}^k (- 1)^i \binom{n+k-1}{n+i-1} f^{\mu/ (i)}
\end{equation}
By Theorem \ref{thm:rowcasegeneral}, letting $p=n-1$, when $\ell(\mu)= n$ the right-hand side of \ref{eq:rowprelim} is zero, and when $\ell(\mu)< n$ the right-hand side of \ref{eq:rowprelim} is equal to $f^{\mu+(1^{n-1})}$, and so we are done.
\end{proof}

The following definition will be used in Corollary \ref{cor:tworows}. There, we will consider general $\lambda$. For now, we note that this definition gives us another combinatorial interpretation of the row case.
\begin{definition}
\label{def:G}
    For $\lambda,\mu,n$, with $|\lambda| = |\mu|$, define $G(\lambda,\mu,n)$ to be the set of pairs of tableaux \\$(S,T)\in {\rm SSYT}(\lambda,n)\times {\rm SYT}(\mu)$ such that if $i$ appears in row $r$ of $T$, then the $i^{th}$ smallest value in $S$ is greater than $r$. Here, ties for $i^{th}$ smallest value are broken by first declaring identical entries in a lower row to be larger, and then declaring identical entries in a further-right column to be larger. This definition extends to the case where $\lambda$ and/or $\mu$ are replaced with skew partitions.
\end{definition}
\begin{example}
\label{ex:G}
    Let $\lambda = (4)$, $\mu = (2,2)$, and $n=3$. The $G(\lambda, \mu, n)$ consists of the following five pairs of tableaux:
    \begin{gather*}
        \left( \hspace{.1cm}
        \begin{ytableau}
        2 & 2 & 3 & 3
        \end{ytableau} \hspace{.25cm} ,  \hspace{.25cm}
        \begin{ytableau}
            1 & 2\\
            3 & 4
        \end{ytableau} \hspace{.1cm}
        \right)  \hspace{1cm}
        \left( \hspace{.1cm}
        \begin{ytableau}
        2 & 3 & 3 & 3
        \end{ytableau} \hspace{.25cm} , \hspace{.25cm}
        \begin{ytableau}
            1 & 2\\
            3 & 4
        \end{ytableau} \hspace{.1cm}
        \right)\\ \\
        \left( \hspace{.1cm}
        \begin{ytableau}
        3 & 3 & 3 & 3
        \end{ytableau} \hspace{.25cm} ,  \hspace{.25cm} \begin{ytableau}
            1 & 2\\
            3 & 4
        \end{ytableau} \hspace{.1cm}
        \right)  \hspace{1cm}
        \left( \hspace{.1cm}
        \begin{ytableau}
        2 & 3 & 3 & 3
        \end{ytableau} \hspace{.25cm} , \hspace{.25cm} \begin{ytableau}
            1 & 3\\
            2 & 4
        \end{ytableau} \hspace{.1cm}
        \right)\\ \\
          \left( \hspace{.1cm}
        \begin{ytableau}
        3 & 3 & 3 & 3
        \end{ytableau} 
        \hspace{.25cm} , \hspace{.25cm} \begin{ytableau}
            1 & 3\\
            2 & 4
        \end{ytableau} \hspace{.1cm}
        \right)
    \end{gather*}
            
\end{example}
An alternative interpretation of the row case is given by the following corollary.
\begin{corollary}
\label{cor:rowcasealt}
For any positive integers $k$ and $n$, and any partition $\mu$ with $|\mu| = k$,
\begin{align*}
    [s_\mu]\sbar_{(k)}(x_1,\dots, x_n) = |G((k),\mu,n)|
\end{align*}
\end{corollary}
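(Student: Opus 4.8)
The plan is to combine Theorem~\ref{thm:rowcase} with an explicit bijection. By Theorem~\ref{thm:rowcase}, the quantity $[s_\mu]\sbar_{(k)}(x_1,\dots,x_n)$ equals $f^{\mu+(1^{n-1})}$ when $\ell(\mu)<n$ and equals $0$ otherwise, so it suffices to produce a bijection between $G((k),\mu,n)$ and ${\rm SYT}(\mu+(1^{n-1}))$ in the first case, and to check that $G((k),\mu,n)=\varnothing$ when $\ell(\mu)\geq n$. The latter is immediate: some entry $i$ of $T$ occupies row $\ell(\mu)$, and membership in $G$ would force $s_i\geq \ell(\mu)+1\geq n+1$, impossible since every entry of $S$ lies in $[n]$. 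Before building the bijection I would first unwind the definition of $G((k),\mu,n)$ in this single-row case. Since $\lambda=(k)$, an element of ${\rm SSYT}((k),n)$ is a weakly increasing word $s_1\leq\cdots\leq s_k$ with $s_j\in[n]$, and the tie-breaking rule collapses (only the further-right-is-larger clause can apply), so the $i$-th smallest value of $S$ is simply $s_i$. Writing $r_i$ for the row of $i$ in $T$, membership in $G((k),\mu,n)$ is then exactly the system of inequalities $s_i\geq r_i+1$ for all $i$.

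The bijection I would use goes through the first column. Given $U\in{\rm SYT}(\mu+(1^{n-1}))$, I delete its first column, which has height $n-1$ since $\ell(\mu)<n$; the remaining cells form a copy of $\mu$. Its first-column entries $c_1<\cdots<c_{n-1}$ and the remaining entries $d_1<\cdots<d_k$ partition $[n-1+k]$, and standardizing the copy of $\mu$ (replacing $d_j$ by $j$) yields a tableau $T\in{\rm SYT}(\mu)$. Setting $s_j:=d_j-j+1$ produces the word $S$, and I would record the identity $s_j=\#\{i:c_i<d_j\}+1$, which follows since exactly $j-1$ of the $d$'s and $\#\{i:c_i<d_j\}$ of the $c$'s lie below $d_j$. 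Conversely, from a pair $(S,T)$ I recover $d_j:=s_j+j-1$, which is strictly increasing in $[n-1+k]$; its complement fills the first column from top to bottom, while $T$ relabeled by $j\mapsto d_j$ fills the copy of $\mu$, giving a candidate tableau $U$.

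The routine verifications are that $S$ is weakly increasing and lands in $[n]$ (from $d_j$ strictly increasing and $\#\{i:c_i<d_j\}\leq n-1$), and that the two maps are mutually inverse relabelings. The crux, and what I expect to be the main obstacle, is matching the inequalities defining $G$ with the validity of $U$ as a standard tableau. The only nontrivial increase to check in $U$ is across the boundary between the first column and the copy of $\mu$: in row $r$ one needs the first-column entry $c_r$ to be smaller than the leftmost $\mu$-entry $d_j$ of that row. Because $U$ is row- and column-increasing, $c_1<\cdots<c_r<d_j$, so $\#\{i:c_i<d_j\}\geq r$, which is precisely $s_j\geq r+1$; reading this equivalence in both directions shows the forward map lands in $G((k),\mu,n)$ and the inverse map lands in ${\rm SYT}(\mu+(1^{n-1}))$. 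Carefully bookkeeping which inequality in the definition of $G$ corresponds to which covering relation of $U$, and confirming that the tie-breaking convention genuinely trivializes for a single row, is where the real work lies; once this correspondence is nailed down, $|G((k),\mu,n)|=f^{\mu+(1^{n-1})}$ and the corollary follows from Theorem~\ref{thm:rowcase}.
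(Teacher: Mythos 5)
Your proposal is correct and follows essentially the same route as the paper: invoke Theorem~\ref{thm:rowcase} and construct a bijection between ${\rm SYT}(\mu+(1^{n-1}))$ and $G((k),\mu,n)$ by deleting the first column, standardizing the remaining copy of $\mu$, and recording for each non-first-column entry the count of first-column entries below it (your $s_j=\#\{i:c_i<d_j\}+1$ is exactly the paper's rule that $S'$ has $a_i-a_{i-1}-1$ copies of $i$). Your write-up is in fact more careful than the paper's, which asserts the bijection ``clearly'' works; your explicit handling of the boundary inequalities and of the degenerate case $\ell(\mu)\geq n$ fills in details the paper leaves implicit.
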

\begin{proof}
We construct a bijection from ${\rm SYT}(\mu + (1^{n-1}))$ to $G((k),\mu,n)$ to prove this statement.  Fix an element $S\in {\rm SYT}(\mu + (1^{n-1}))$, and let $1= a_1<\dots < a_{n-1}$ be the values in the leftmost column of $S$, and let $a_n = n+|\mu| = n+k$.  Let $S'$ be the unique element of ${\rm SSYT}((k),n)$ such that, for each $2\leq i \leq n$, $S'$ has exactly $a_i-a_{i-1}-1$ $i$'s.  Define $T$ to be the unique element of ${\rm SYT}(\mu)$ such that the labels of $T$ are in the same relative order as $S$ with the first column deleted.  This transformation from $S$ to $(S',T)$ is clearly an injection, and the inverse transformation is also an injection, so this is a bijection. 

For example, the pairs in Example $\ref{ex:G}$ correspond respectively to the standard Young tableaux

\begin{gather*}
        \begin{ytableau}
        1 & 2 & 3\\
        4 & 5 & 6
        \end{ytableau}
          \hspace{1cm}
        \begin{ytableau}
        1 & 2 & 4\\
        3 & 5 & 6
        \end{ytableau}\\ \\
        \begin{ytableau}
        1 & 3 & 4\\
        2 & 5 & 6
        \end{ytableau}
          \hspace{1cm}
        \begin{ytableau}
        1 & 2 & 5\\
        3 & 4 & 6
        \end{ytableau}\\ \\
        \begin{ytableau}
        1 & 3 & 5\\
        2 & 4 & 6
        \end{ytableau}
    \end{gather*}

of shape $(2, 2) + (1,1) = (3,3)$.
\end{proof}

The following corollary will be used in the proof of Theorem \ref{thm:singlecolumn}.

\begin{corollary}
\label{cor:rowcolumnmult}
    For any $k\leq n$ and $1\leq p \leq n-k+1$,
    \begin{align*}
    [s_{(1^k)}]&\sbar_{(k)}(x_1,\dots, x_n) \\
    &= |\{S\in {\rm SSYT}((k),n): S(1,1)>1, (S(1,i),S(1,i+1))\not= (p+i,p+i)\}|
\end{align*}
\end{corollary}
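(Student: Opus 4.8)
The plan is to reduce the statement to the single numerical identity that the displayed set, which I will call $X_p$, has cardinality $\binom{n+k-1}{k}-\binom{n+k-1}{k-1}$, and to obtain the coefficient on the left separately. For the coefficient I would specialize Equation \ref{eq:rowprelim}, namely $[s_\mu]\sbar_{(k)}=\sum_{i=0}^k(-1)^i\binom{n+k-1}{n+i-1}f^{\mu/(i)}$, to $\mu=(1^k)$. Since $(i)\subseteq(1^k)$ only for $i\in\{0,1\}$, all but two terms vanish, and $f^{(1^k)}=f^{(1^k)/(1)}=1$, so
\[
[s_{(1^k)}]\sbar_{(k)}(x_1,\dots,x_n)=\binom{n+k-1}{n-1}-\binom{n+k-1}{n}=\binom{n+k-1}{k}-\binom{n+k-1}{k-1}.
\]
Thus everything comes down to counting $X_p$.

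To count $X_p$ I would pass to the coordinates $e_j:=S(1,j)-j$. Writing $S$ as a weakly increasing word $a_1\le\cdots\le a_k$ in $[n]$, the sequence $j\mapsto e_j$ decreases by at most $1$ at each step. The condition $S(1,1)>1$ becomes $e_1\ge 1$, while the forbidden pattern $a_i=a_{i+1}=p+i$ becomes the unit down-step $e_i=p,\ e_{i+1}=p-1$. Because $e$ can descend only in unit steps, forbidding this down-step is equivalent to requiring that $\{j:e_j\ge p\}$ be a suffix of $[k]$, i.e. once the path reaches level $p$ it never returns below it. I would then count $X_p$ by passing to the complementary ``bad'' set and applying the reflection principle. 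I would split the bad set as $\{a_1=1\}$, which by deleting the forced leading $1$ is in bijection with ${\rm SSYT}((k-1),n)$ and so has size $\binom{n+k-2}{k-1}$, together with the words that avoid $a_1=1$ but make a forbidden down-step; the latter I would show has size $\binom{n+k-2}{k-2}$ by reflecting the $e$-path at its first crossing of the level-$p$ threshold. Pascal's rule then gives $\binom{n+k-1}{k-1}$ for the whole bad set, hence $\binom{n+k-1}{k}-\binom{n+k-1}{k-1}$ for $X_p$, as desired.

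As both a consistency check and a clean special case, note that when $p=1$ the set $X_1$ coincides exactly with the underlying tableau set of $G((k),(1^k),n)$ from Corollary \ref{cor:rowcasealt}. Indeed, unwinding Definition \ref{def:G} for $\mu=(1^k)$, whose standard tableau is unique with $i$ in row $i$, gives the condition $S(1,i)>i$ for all $i$; a short smallest-violation argument then shows that a word with $a_1\ge 2$ fails some inequality $a_i>i$ if and only if it contains a pair $a_{i-1}=a_i=i=1+(i-1)$, which is precisely a forbidden pattern at position $i-1$ when $p=1$. Hence for $p=1$ the corollary follows immediately from Corollary \ref{cor:rowcasealt}, without any reflection.

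The main obstacle is the general $p$ count. The forbidden staircase events overlap nontrivially, adjacent ones are mutually exclusive but non-adjacent ones may co-occur, so a direct inclusion–exclusion over them is unwieldy. The real work is to make the reflection argument manifestly independent of $p$ while simultaneously respecting the upper cap $a_j\le n$ and the separate boundary condition $e_1\ge 1$, which sits at a different level than the staircase threshold. I expect this $p$-independence to be the crux, with the suffix reformulation $\{j:e_j\ge p\}$ being the key device that tames it.
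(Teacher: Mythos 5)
Your reduction of the left-hand side is fine: specializing Equation~\ref{eq:rowprelim} to $\mu=(1^k)$ does give $[s_{(1^k)}]\sbar_{(k)}(x_1,\dots,x_n)=\binom{n+k-1}{n-1}-\binom{n+k-1}{n}$; your $p=1$ case does follow from Corollary~\ref{cor:rowcasealt} exactly as you describe; and your ``suffix'' reformulation (once the path $e_j=S(1,j)-j$ reaches level $p$ it may never return below it) is correct --- it is in fact the same observation that underlies the paper's argument. The genuine gap is the step you yourself flag at the end: the claim that the words with $a_1\ge 2$ containing a forbidden pattern number $\binom{n+k-2}{k-2}$, ``by reflecting the $e$-path at its first crossing of the level-$p$ threshold,'' is not a proof, and the named technique does not apply as stated. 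In the $e$-coordinates the step set is asymmetric (arbitrary up-steps, unit down-steps), so reflecting part of a path across a horizontal level leaves the path class. If you instead pass to the monotone $E/N$-lattice-path picture, where reflection across a diagonal is legitimate, the forbidden event is not ``the path touches the line $y=x+p-1$'' (the event the reflection principle handles) but the local pattern of two consecutive $E$-steps straddling a lattice point of that line, i.e.\ a one-way downward crossing; and reflecting the suffix after the first such crossing across that diagonal moves the endpoint $(k,n-1)$ to the $p$-dependent point $(n-p,\,k+p-1)$, whereas the count you need, $\binom{n+k-2}{k-2}$, is independent of $p$. So no single standard reflection can furnish the bijection; the ``$p$-independence'' you defer to future work is precisely the entire content of the statement for $p\ge 2$.

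For comparison, the paper never counts either side numerically: it constructs a bijection from your set $X_p$ onto $G((k),(1^k),n)$ --- equivalently onto the $p$-independent set $\{S: S(1,i)>i \text{ for all } i\}$, which is your $X_1$ --- and then quotes Corollary~\ref{cor:rowcasealt}. The map is the identity when $S(1,i)>p+i-1$ for all $i$; otherwise it sets $a=\max\{i: S(1,i)\le p+i-1\}$ and reverses-and-complements the prefix, $S'(1,i)=p+a+1-S(1,a+1-i)$ for $i\le a$. That is indeed a kind of reflection, but of the maximal sub-threshold \emph{prefix} (reversal plus complementation about $p+a+1$), not a reflection of the path at its first crossing; this is exactly the device that achieves $p$-independence, and it respects the cap $S(1,i)\le n$ because $p+a-1\le n$ whenever $p\le n-k+1$. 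If you wish to salvage your complementary-counting scheme, the same prefix-reversal idea can be applied to the bad words (split at the first forbidden index), but that bijection still has to be written down and inverted; as it stands, your proposal proves the corollary only for $p=1$.
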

\begin{proof}
We construct a bijection from the desired set to $G((k),(1^k),n)$.  Let \\$S\in {\rm SSYT}((k),n)$ be such that $S(1,1) > 1$ and $(S(1,i),S(1,i+1))\not= (p+i,p+i)$ for any $1\leq i < k$.  Break the bijection into two cases depending on whether there exists some $i$ for which $S(1,i) \leq p+i-1$.

If $S(1,i) > p+i-1$ for all $i$, then the bijection maps $S$ to $(S,T)$, where $T$ is the unique element of ${\rm SYT}((1^k))$. To show that it is well-defined, observe that since $S(1,i) > p+i-1$ for all $i$, in particular $S(1,i) > i$, so $(S,T) \in G((k),(1^k),n)$.

On the other hand, if there exists some $i$ for which $S(1,i) \leq p+i-1$, let $a = \max\{i: S(1,i) \leq p+i-1\}$.  Let $S'\in {\rm SSYT}((k),n)$ be defined as $S'(1,i) = S(1,i)$ for $i>a$, and $S'(1,i) = p+a+1 - S(1,a+1-i)$ for $i\leq a$.  The bijection will map $S$ to $(S',T)$, where $T$ is the unique element of ${\rm SYT}((1^k))$.

For example, suppose $k=3$ and $n=4$. First let $p=1$, so the elements of the set in question are
\begin{gather*}
    \begin{ytableau}
        2 & 3 & 4
    \end{ytableau} \hspace{1cm}
    \begin{ytableau}
        2 & 4 & 4
    \end{ytableau} \hspace{1cm}
    \begin{ytableau}
        3 & 3 & 4
    \end{ytableau} \\
    \begin{ytableau}
        3 & 4 & 4
    \end{ytableau} \hspace{1cm}
    \begin{ytableau}
        4 & 4 & 4
    \end{ytableau}
\end{gather*}
For each of these tableau $S$, we have $S(1,i) > p+i-1 = i$, so the bijection will map $S$ to $(S, T)$, where $T$ is the unique element of ${\rm SYT}((1^k))$.
Now suppose instead that $p=2$. Then the mapping of $S$ to $S'$ looks as follows:
\begin{center}
\begin{tabular}{c | c}
$S$ & $S'$ \\
\hline
\rule{0pt}{4ex}    
\begin{ytableau}
        2 & 2 & 2
    \end{ytableau} & \begin{ytableau}
        4 & 4 & 4
    \end{ytableau} \\
    \hline
\rule{0pt}{4ex} 
    \begin{ytableau}
        2 & 2 & 3
    \end{ytableau} & \begin{ytableau}
        3 & 4 & 4
    \end{ytableau} \\
    \hline

\rule{0pt}{4ex} 
    \begin{ytableau}
        2 & 2 & 4
    \end{ytableau} & \begin{ytableau}
        2 & 4 & 4
    \end{ytableau} \\
    \hline

    \rule{0pt}{4ex} 
    \begin{ytableau}
        2 & 3 & 3
    \end{ytableau} & \begin{ytableau}
        3 & 3 & 4
    \end{ytableau} \\
    \hline

\rule{0pt}{4ex} 
    \begin{ytableau}
        2 & 3 & 4
    \end{ytableau} & \begin{ytableau}
        2 & 3 & 4
    \end{ytableau}
    
\end{tabular}
\end{center}
We can see that the elements $(S', T)$ are again the elements of $G((3),(1^3), 4)$, and so the bijection will map the tableau $S$ to the pair $(S', T)$. We proceed by proving that this map is indeed a bijection. We start with a claim.

\begin{claim}
\label{claim:rowbound}
For all $i \leq a$, $S(1,i)\leq p+i-1$.
\end{claim}

\begin{proof}
Assume for the sake of contradiction that there exists $i \leq a$ such that $S(1,i)> p+i-1$, and let $j$ be the maximal such $i$.  By the definition of $a$, $j<a$, and so 
\[
p+j \geq S(1,j+1) \geq S(1,j) > p+j-1
\]
so $S(1,j) = S(1,j+1) = p+j$, which contradicts the construction of $S$.
\end{proof}

Next, we show that the map is well-defined in this case.  Since $S$ is weakly increasing along the row, the only cells where it is not obvious that $S'$ is weakly increasing is from $S'(1,a)$ to $S'(1,a+1)$.  For here, observe that since $S(1,1)\geq 2$ and $S'(1,a+1) > p+a$, that $S'(1,a) \leq p+a+1-2 = p+a-1 < p+a+1< S'(1,a+1)$. By Claim~\ref{claim:rowbound}, for all $i\leq a$ we have that $S'(1,i) = p+a+1 -S(1,a+1-i) \geq p+a+1 - (p + (a+1-i)-1) = i+1 >i$.  Also, for $i > a$, $S'(1,i) \geq p+i > i$.  As a result, this map is well-defined.

To show that this map is a bijection, it suffices to show that it is invertible.  The inverse map is defined as follows: given $(S',T)\in G((k),(1^k),n) $, return $S$ if $S'(1,i) > p+i -1$ for all $i$.  If there exists some $i$ such that $ S'(1,i) \leq p+i-1$, then let $a = \max\{i: S'(1,i) \leq p+i-1\}$.  In this case, the map returns $S\in {\rm SSYT}((k))$ such that $S(1,i) = S'(1,i)$ for all $i>a$, and $S(1,i) = p+a+1 - S'(1,a+1-i)$.  In the first case, since $S'(1,i) > p+i -1$, in particular $S'(1,i+1) \not=p+i$ for any $i$, so $(S(1,i),S(1,i+1)) \not= (p+i,p+i)$, so this map is well defined.

For the second case, since $S'$ is weakly increasing, the only place where it is not obvious that $S$ is weakly increasing is from $S(1,a)$ to $S(1,a+1)$.  However, here 
\[S(1,a+1) > p + (a+1) -1 = p+ a = p +a + 2 - 2 > p+a +1 - S'(1,a) = S(1,a)
\]
so it is weakly increasing everywhere.  Also, notice that for all $i>a$, $S'(1,i) > p+i -1$, so in particular $S'(1,i) \not=p+i-1$ for any $i$, so $(S(1,i-1),S(1,i)) \not= (p+i-1,p+i-1)$.  Similarly, for all $i\leq a$, $S'(1,i)>i$, so 
\[
S(1,i) = p+a+1 - S'(1,a+1-i) < p+a+1 - (a+1-i) = p+i
\]
so in particular $S'(1,i) \not= p+i$. so $(S(1,i),S(1,i+1)) \not= (p+i,p+i)$.  This implies that the reverse map is well-defined.

The fact that these two maps are inverses of each other follows from the observation that the two $a$-values in the maps will be the same, so the map is a bijection. So the two sets in question are the same size, completing the proof.

\end{proof}

\section{Multiplicity of one or two rows}

This section and the next provide some partial results on the Schur expansion of $\sbar_{\lambda}$ for general $\lambda$. In particular, we compute the coefficients of $s_{(a, b)}$ and $s_{1^{|\lambda|}}$.

For a $T\in {\rm SSYT}(\lambda)$, let $wt_{>i}(T) = \sum_{j>i} wt_i(T)$. We start with the following lemma.

\begin{lemma}
\label{lemma:2RowMonomial}
For $a\geq b\geq 0$ such that $a+b = |\lambda|$,
\begin{align*}
    [x_1^a x_2^b] \sbar_{\lambda}(x_1,\dots,x_n) = \sum_{\substack{\nu\in {\rm Par}_2 \\ |\nu|\leq a+b}} |{\rm SSYT}(\lambda/\nu,[3,n])|\sum_{d = \nu_2}^{\nu_1}\binom{a+b-|\nu|}{a-d}.
\end{align*}
\end{lemma}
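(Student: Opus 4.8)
The plan is to reduce the computation to two variables and then expand combinatorially. Since $\sbar_\lambda$ is homogeneous of degree $|\lambda| = a+b$, the coefficient of $x_1^a x_2^b$ is unchanged if I first set $x_3 = \dots = x_n = 0$: every monomial contributing to $[x_1^a x_2^b]$ is already supported on $\{x_1,x_2\}$, and the specialization only kills the other monomials. Under this specialization the Chern roots $y_i = (x_1+\dots+x_n)-x_i$ become $y_1 = x_2$, $y_2 = x_1$, and $y_i = x_1+x_2$ for $3 \le i \le n$, so that
\[
\sbar_\lambda(x_1,x_2,0,\dots,0) = s_\lambda(x_2,x_1,x_1+x_2,\dots,x_1+x_2).
\]

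Next I would use the symmetry of $s_\lambda$ to reorder the arguments as $(x_1,x_2,x_1+x_2,\dots,x_1+x_2)$ and apply the semistandard tableau definition of the Schur polynomial. This gives
\[
\sbar_\lambda(x_1,x_2,0,\dots,0) = \sum_{T \in {\rm SSYT}(\lambda,n)} x_1^{wt_1(T)}\, x_2^{wt_2(T)}\, (x_1+x_2)^{wt_{>2}(T)},
\]
where $wt_{>2}(T)$ counts the entries of $T$ exceeding $2$. Expanding $(x_1+x_2)^{wt_{>2}(T)}$ by the binomial theorem and extracting the coefficient of $x_1^a x_2^b$ forces the number of $x_1$'s drawn from the binomial factor to equal $a - wt_1(T)$ (the $x_2$-count is then automatically correct because the total degree is $a+b$). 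Thus
\[
[x_1^a x_2^b]\,\sbar_\lambda(x_1,\dots,x_n) = \sum_{T \in {\rm SSYT}(\lambda,n)} \binom{wt_{>2}(T)}{a - wt_1(T)},
\]
with the usual convention that the binomial vanishes outside the valid range.

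The heart of the argument is then to organize this sum by the subshape occupied by the small entries. For $T \in {\rm SSYT}(\lambda,n)$, let $\nu$ be the set of cells whose entry is $1$ or $2$. Because entries weakly increase along rows and strictly increase down columns, $\nu$ is a Young subdiagram of $\lambda$, and since a strictly increasing column can contain at most one $1$ and one $2$, $\nu$ has at most two rows, i.e. $\nu \in {\rm Par}_2$. I would then check that, for fixed $\nu$, the restriction of $T$ to $\nu$ is determined entirely by the number $d$ of $1$'s it contains: every cell of row $2$ holds a $2$, the $\nu_2$ cells of row $1$ lying above them are forced to be $1$'s, and the remaining $\nu_1-\nu_2$ cells of row $1$ form a run of $1$'s followed by $2$'s. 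Hence $wt_1(T) = d$ for a unique $d$ with $\nu_2 \le d \le \nu_1$, each realized by exactly one filling of $\nu$. The complementary cells, carrying entries in $[3,n]$, form precisely an element of ${\rm SSYT}(\lambda/\nu,[3,n])$, and there are $wt_{>2}(T) = |\lambda|-|\nu| = a+b-|\nu|$ of them. Since $\binom{a+b-|\nu|}{a-d}$ depends only on $(\nu,d)$ and not on the filling of $\lambda/\nu$, summing over all $T$ with prescribed $(\nu,d)$ contributes $|{\rm SSYT}(\lambda/\nu,[3,n])|\binom{a+b-|\nu|}{a-d}$. Summing over $d \in [\nu_2,\nu_1]$ and over $\nu \in {\rm Par}_2$ (terms with $\nu\not\subseteq\lambda$ vanish, and $|\nu|\le a+b$ since $\nu\subseteq\lambda$) yields exactly the claimed formula.

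The routine parts are the specialization of the Chern roots and the binomial extraction; the step requiring the most care is the structural analysis of $\nu$, where one must verify simultaneously that the small entries occupy a two-row subshape, that their filling is forced once $d$ is fixed, and that $T$ factors cleanly into a $\nu$-part and a skew $[3,n]$-part, so that the binomial weight pulls out of the count of skew fillings.
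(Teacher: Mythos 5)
Your proposal is correct and takes essentially the same approach as the paper: expand $\sbar_\lambda$ over ${\rm SSYT}(\lambda,n)$ with the Chern roots specialized to $x_2$, $x_1$, and $x_1+x_2$, extract the binomial coefficient $\binom{wt_{>2}(T)}{a-\cdot}$, and then stratify tableaux by the two-row subshape $\nu$ occupied by the entries $1,2$ together with the skew filling ${\rm SSYT}(\lambda/\nu,[3,n])$. The only difference is organizational -- you stratify by $(\nu,d)$ directly, whereas the paper first groups by the weight pair $(wt_1,wt_2)$ and then reindexes the resulting triple sum; both yield the identical formula (note that whether $d$ counts $1$'s or $2$'s is immaterial, since either ranges over $[\nu_2,\nu_1]$).
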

\begin{proof} By the definition of $\sbar_\lambda$ and the binomial theorem, we have
    \begin{align*}
        [x_1^a x_2^b] \sbar_{\lambda}(x_1,\dots,x_n) &= [x_1^a x_2^b]\sum_{T\in {\rm SSYT}(\lambda)} \prod_{i=1}^n(x_1 + \dots + x_{i-1} + x_{i+1} + \dots + x_n)^{wt_i(T)}\\
        &= \sum_{T\in {\rm SSYT}(\lambda)} [x_1^a x_2^b]x_2^{wt_1(T)} x_1^{wt_2(T)} (x_1+x_2)^{wt_{>2}(T)}\\
        &= \sum_{T\in {\rm SSYT}(\lambda)} [x_1^{a-wt_2(T)} x_2^{b-wt_1(T)}](x_1+x_2)^{wt_{>2}(T)}\\
        &= \sum_{T\in {\rm SSYT}(\lambda)} \binom{wt_{>2}(T)}{a-wt_2(T)}\\
        &= \sum_{c+d = 0}^{a+b}\sum_{\substack{T\in {\rm SSYT}(\lambda) \\ wt_1(T) = c, wt_2(T) = d}}
        \binom{wt_{>2}(T)}{a-wt_2(T)}\\
        &= \sum_{c+d = 0}^{a+b} \binom{a+b-c-d}{a-d}|\{T\in {\rm SSYT}(\lambda): wt_1(T) = c, wt_2(T)=d\}|\\
    \end{align*}
For $c,d$, an element $T\in {\rm SSYT}(\lambda)$ with  $wt_1(T) = c, wt_2(T)=d$ must have all $1$'s in the first row, and can have at most $\min(c,d)$ $2$'s in the second row, with the rest in the first row to the right of the $1$'s.  Beyond that, the numbers larger than 2 can be arranged in any way, so
\[
|\{T\in {\rm SSYT}(\lambda): wt_1(T) = c, wt_2(T)=d\}| = \sum_{i = \max(c,d)}^{c+d} |{\rm SSYT}(\lambda/(i,c+d-i),[3,n])|
\]
As a result,
    \begin{align*}
        [x_1^a x_2^b] \sbar_{\lambda}(x_1,\dots,x_n) &= \sum_{c+d = 0}^{a+b} \binom{a+b-c-d}{a-d}|\{T\in {\rm SSYT}(\lambda): wt_1(T) = c, wt_2(T)=d\}|\\
        &= \sum_{c+d = 0}^{a+b} \binom{a+b-c-d}{a-d}\sum_{i = \max(c,d)}^{c+d} |{\rm SSYT}(\lambda/(i,c+d-i),[3,n])|\\
        &= \sum_{k = 0}^{a+b} \sum_{d = 0}^k\binom{a+b-k}{a-d}\sum_{i = \max(k-d,d)}^{k} |{\rm SSYT}(\lambda/(i,k-i),[3,n])|\\
        &= \sum_{k = 0}^{a+b} \sum_{d = 0}^k\sum_{i = \max(k-d,d)}^{k}\binom{a+b-k}{a-d} |{\rm SSYT}(\lambda/(i,k-i),[3,n])|\\
        &= \sum_{k = 0}^{a+b} \sum_{i = \lceil k/2 \rceil}^{k}\sum_{d = k-i}^i\binom{a+b-k}{a-d} |{\rm SSYT}(\lambda/(i,k-i),[3,n])|\\
        &= \sum_{k = 0}^{a+b} \sum_{i = \lceil k/2 \rceil}^{k}|{\rm SSYT}(\lambda/(i,k-i),[3,n])|\sum_{d = k-i}^i\binom{a+b-k}{a-d} \\
        &= \sum_{\substack{\nu\in {\rm Par}_2 \\ |\nu|\leq a+b}} |{\rm SSYT}(\lambda/\nu,[3,n])|\sum_{d = \nu_2}^{\nu_1}\binom{a+b-|\nu|}{a-d}, 
    \end{align*}
completing the proof.
\end{proof}
As a corollary, we can express the coefficient indexed by a row in the expansion of $\sbar_\lambda$ as a simple count of semistandard Young tableaux.
\begin{corollary}
    \label{cor:singlerow}
    For any partition $\lambda$ and any positive integer $n \geq 2$,
    \begin{align*}
        [s_{(|\lambda|)}]\sbar_{\lambda}(x_1,\dots,x_n) = |{\rm SSYT}(\lambda,[2,n])|
    \end{align*}
\end{corollary}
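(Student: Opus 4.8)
The plan is to reduce the extraction of the Schur coefficient $[s_{(|\lambda|)}]$ to the extraction of a single monomial coefficient, and then feed this into the monomial computation already carried out in Lemma \ref{lemma:2RowMonomial}.

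First I would record the general principle that for any symmetric polynomial $F$ homogeneous of degree $m$, one has $[s_{(m)}]F = [x_1^m]F$. Indeed, writing $F = \sum_\mu c_\mu s_\mu$, the monomial $x_1^m$ occurs in $s_\mu$ with coefficient equal to the number of elements of ${\rm SSYT}(\mu)$ all of whose entries equal $1$; since columns must be strictly increasing, such a filling exists only when $\mu$ is the single row $(m)$, in which case it is unique. Hence $[x_1^m]s_\mu = \delta_{\mu,(m)}$ and $[x_1^m]F = c_{(m)}$. As $\sbar_\lambda$ is a symmetric polynomial homogeneous of degree $|\lambda|$, this yields $[s_{(|\lambda|)}]\sbar_\lambda = [x_1^{|\lambda|}]\sbar_\lambda$.

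Next I would apply Lemma \ref{lemma:2RowMonomial} with $a = |\lambda|$ and $b = 0$, so that $a+b = |\lambda|$ and
\[
[x_1^{|\lambda|}]\sbar_\lambda = \sum_{\substack{\nu \in {\rm Par}_2 \\ |\nu| \le |\lambda|}} |{\rm SSYT}(\lambda/\nu, [3,n])| \sum_{d=\nu_2}^{\nu_1} \binom{|\lambda| - |\nu|}{|\lambda| - d}.
\]
The key simplification is a range analysis: the inner binomial $\binom{|\lambda|-|\nu|}{|\lambda|-d}$ vanishes unless $d \ge |\nu| = \nu_1 + \nu_2$, while the summation index is bounded by $d \le \nu_1$. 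Together these force $\nu_2 = 0$ and $d = \nu_1$, and in that case the binomial equals $1$. Thus only the single-row shapes $\nu = (j)$ survive, giving
\[
[x_1^{|\lambda|}]\sbar_\lambda = \sum_{j \ge 0} |{\rm SSYT}(\lambda/(j), [3,n])|.
\]

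Finally I would identify the right-hand side with $|{\rm SSYT}(\lambda, [2,n])|$ by a direct classification. In any semistandard filling of $\lambda$ with entries drawn from $[2,n]$, strict increase down columns forces every $2$ to lie in the first row, occupying an initial segment of length $j$ for some $j \ge 0$; deleting these $2$'s gives a bijection with ${\rm SSYT}(\lambda/(j),[3,n])$. Summing over $j$ (with terms where $(j)\not\subseteq\lambda$ vanishing) recovers exactly the sum above, completing the proof. I expect the only mildly delicate point to be the collapse of the double sum to single-row $\nu$; the monomial-to-Schur reduction and the final classification are routine.
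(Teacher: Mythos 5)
Your proposal is correct and follows essentially the same route as the paper: reduce $[s_{(|\lambda|)}]$ to the monomial coefficient $[x_1^{|\lambda|}x_2^0]$, apply Lemma \ref{lemma:2RowMonomial} with $(a,b)=(|\lambda|,0)$, collapse the sum to single-row $\nu$ via the vanishing of the binomial coefficient, and identify the surviving sum with $|{\rm SSYT}(\lambda,[2,n])|$ by tracking where the $2$'s must sit. The only difference is cosmetic: you justify the Schur-to-monomial reduction via Kostka numbers and do the range analysis directly, where the paper leaves the former implicit and uses the symmetry $\binom{N}{k}=\binom{N}{N-k}$ for the latter.
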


\begin{proof}
    By Lemma~\ref{lemma:2RowMonomial}, 
    \begin{align*}
        [s_{(|\lambda|)}(x_1,\dots,x_n)] \sbar_{\lambda}(x_1,\dots,x_n) &= [x_1^{|\lambda|} x_2^0] \sbar_{\lambda}(x_1,\dots,x_n)\\
        &= \sum_{\substack{\nu\in {\rm Par}_2 \\ |\nu|\leq |\lambda|}} |{\rm SSYT}(\lambda/\nu,[3,n])|\sum_{d = \nu_2}^{\nu_1}\binom{|\lambda|-|\nu|}{|\lambda|-d}\\
        &= \sum_{\substack{\nu\in {\rm Par}_2 \\ |\nu|\leq |\lambda|}} |{\rm SSYT}(\lambda/\nu,[3,n])|\sum_{d = \nu_2}^{\nu_1}\binom{|\lambda|-|\nu|}{d-|\nu|}\\
        &= \sum_{\substack{\nu\in {\rm Par}_1 \\ |\nu|\leq |\lambda|}} |{\rm SSYT}(\lambda/\nu,[3,n])|\\
        &= |{\rm SSYT}(\lambda,[2,n])|,
    \end{align*}
    completing the proof.
\end{proof}
We are finally able to give a combinatorial expression for the coefficeints indexed by two rows in the expansion of $\sbar_\lambda$. Recall the definition of $G(\lambda,\mu,n)$ (Definition \ref{def:G}).
\begin{corollary}
    \label{cor:tworows}
    For $a\geq b \geq 0$,
        \begin{align*}
        [s_{(a,b)}]\sbar_{\lambda}(x_1,\dots,x_n) = \sum_{p} G(\lambda/(p,p),(a,b)/(p,p),n).   
        \end{align*}
        %where $G(\lambda,(a,b),n,p)$ equals the number of elements \newline $(S,T)\in {\rm SSYT}(\lambda/(p,p),[2,n])\times {\rm SYT}((a,b)/(p,p))$ such that all of the $2$'s in $S$ are in the first row, and if $i$ is in the second row of $T$, then the $i$th smallest element of $S$ is at least $3$.
\end{corollary}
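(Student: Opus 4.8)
The plan is to extract the coefficient of $s_{(a,b)}$ from the monomial coefficients supplied by Lemma~\ref{lemma:2RowMonomial}, and then match the resulting binomial expression against the skew-tableaux count built into $G$.

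First I would reduce the Schur coefficient to a difference of monomial coefficients. Writing $\sbar_\lambda = \sum_\mu c_\mu s_\mu$ and expanding each $s_\mu$ in the monomial basis, the coefficient of $x_1^a x_2^b$ (with $a\ge b$) equals $\sum_\mu c_\mu K_{\mu,(a,b)}$, where $K_{\mu,(a,b)}$ counts semistandard tableaux of shape $\mu$ and content $(a,b)$. Since this content has only two parts, $K_{\mu,(a,b)}$ is $1$ when $\mu=(a+b-j,j)$ with $0\le j\le b$ and $0$ otherwise (the unique filling places all available $2$'s in a single horizontal strip). Hence $[x_1^a x_2^b]\sbar_\lambda = \sum_{j=0}^{b} c_{(a+b-j,j)}$, and this telescopes to
\[
c_{(a,b)} = [x_1^a x_2^b]\sbar_\lambda - [x_1^{a+1}x_2^{b-1}]\sbar_\lambda,
\]
the second term vanishing when $b=0$, which recovers Corollary~\ref{cor:singlerow}. (Equivalently, this is $\langle \sbar_\lambda,\, h_ah_b - h_{a+1}h_{b-1}\rangle = \langle \sbar_\lambda, s_{(a,b)}\rangle$ via the Jacobi--Trudi identity and $h$--$m$ duality.)

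Next I would substitute Lemma~\ref{lemma:2RowMonomial} into both terms. They share the factor $|{\rm SSYT}(\lambda/\nu,[3,n])|$ for each $\nu\in{\rm Par}_2$, and the difference of the two inner sums becomes, with $N:=a+b-|\nu|$,
\[
\sum_{d=\nu_2}^{\nu_1}\left(\binom{N}{a-d}-\binom{N}{a+1-d}\right) = \binom{N}{a-\nu_1}-\binom{N}{a-\nu_2+1},
\]
a telescoping sum in $j=a-d$. Because $(a-\nu_1)+(b-\nu_2)=N=(a-\nu_2+1)+(b-\nu_1-1)$, the determinantal (reflection) formula for standard skew tableaux of a two-row shape identifies this difference as $f^{(a,b)/\nu}$, giving $c_{(a,b)} = \sum_{\nu\in{\rm Par}_2} |{\rm SSYT}(\lambda/\nu,[3,n])|\, f^{(a,b)/\nu}$.

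It then remains to show $\sum_p |G(\lambda/(p,p),(a,b)/(p,p),n)|$ equals this sum, which I expect to be the main obstacle. For fixed $p$, I would analyze the defining condition of $G$ for a pair $(S,T)$: writing $s_1\le\cdots\le s_N$ for the sorted entries of $S$, the requirement that the $i$-th smallest value of $S$ exceed the row of $i$ in $T$ is equivalent, after replacing thresholds by their running maximum, to ``$s_i\ge 2$ for $i<q$ and $s_i\ge 3$ for $i\ge q$'', where $q$ is the smallest label in row $2$ of $T$. Thus $S$ has no $1$'s, and its number $c_2$ of $2$'s satisfies $c_2\le q-1$. Since the inner shape $(p,p)$ is a rectangle, a $2$ in row $2$ of $\lambda/(p,p)$ would sit directly below an entry $\ge 2$, violating column-strictness; so all $2$'s of $S$ lie in row $1$ and form a horizontal strip. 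Conditioning on $c_2$ and deleting these $2$'s identifies the survivors with ${\rm SSYT}(\lambda/(p+c_2,p),[3,n])$, while the tableaux $T$ confining $1,\dots,c_2$ to row $1$ are counted by $f^{(a,b)/(p+c_2,p)}$. Summing over $c_2$ and $p$ and reindexing by $\nu=(p+c_2,p)$, which sweeps out all of ${\rm Par}_2$, yields $\sum_p|G(\ldots)| = \sum_{\nu}|{\rm SSYT}(\lambda/\nu,[3,n])|\,f^{(a,b)/\nu}$, matching $c_{(a,b)}$. The delicate points are the running-maximum reduction of the $G$-condition and the verification that the rectangular inner shape forces every $2$ of $S$ into the first row.
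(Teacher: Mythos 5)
Your proposal is correct and follows essentially the same route as the paper's proof: both extract $[s_{(a,b)}]$ as the difference $[x_1^ax_2^b]\sbar_\lambda - [x_1^{a+1}x_2^{b-1}]\sbar_\lambda$, telescope the binomial sums from Lemma~\ref{lemma:2RowMonomial} to get $\sum_{\nu}|{\rm SSYT}(\lambda/\nu,[3,n])|f^{(a,b)/\nu}$, and then match this with $\sum_p|G(\lambda/(p,p),(a,b)/(p,p),n)|$ by grouping $\nu$ according to $\nu_2=p$ and placing the $\nu_1-\nu_2$ twos (resp.\ the labels $1,\dots,\nu_1-\nu_2$) in the first row. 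Your write-up is somewhat more detailed than the paper's at the two ends (the Kostka-number justification of the first identity, and the running-maximum reduction of the $G$-condition together with the observation that the rectangular inner shape forces all $2$'s into row one), but the underlying argument is the same.
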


\begin{proof}
    By Lemma~\ref{lemma:2RowMonomial}, 
    \begin{align*}
        [s_{(a,b)}]\sbar_{\lambda}(x_1,\dots,x_n) =& [x_1^{a} x_2^b] \sbar_{\lambda}(x_1,\dots,x_n) - [x_1^{a+1} x_2^{b-1}] \sbar_{\lambda}(x_1,\dots,x_n)\\
        =& \sum_{\substack{\nu\in {\rm Par}_2 \\ |\nu|\leq a+b}} |{\rm SSYT}(\lambda/\nu,[3,n])|\sum_{d = \nu_2}^{\nu_1}\binom{a+b-|\nu|}{a-d}\\
        &- \sum_{\substack{\nu\in {\rm Par}_2 \\ |\nu|\leq a+b}} |{\rm SSYT}(\lambda/\nu,[3,n])|\sum_{d = \nu_2}^{\nu_1}\binom{a+b-|\nu|}{a+1-d}\\
        =& \sum_{\substack{\nu\in {\rm Par}_2 \\ |\nu|\leq a+b}} |{\rm SSYT}(\lambda/\nu,[3,n])|\sum_{d = \nu_2}^{\nu_1}\left(\binom{a+b-|\nu|}{a-d}-\binom{a+b-|\nu|}{a+1-d}\right)\\
        =& \sum_{\substack{\nu\in {\rm Par}_2 \\ |\nu|\leq a+b}} |{\rm SSYT}(\lambda/\nu,[3,n])|\left(\binom{a+b-|\nu|}{a-\nu_1}-\binom{a+b-|\nu|}{a+1-\nu_2}\right)\\
        =& \sum_{\substack{\nu\in {\rm Par}_2 \\ |\nu|\leq a+b}} |{\rm SSYT}(\lambda/\nu,[3,n])|f^{(a,b)/\nu}\\
        =& \sum_{\nu \subseteq (a,b)} |{\rm SSYT}(\lambda/\nu,[3,n])|f^{(a,b)/\nu}.
    \end{align*}
    One can think of $|{\rm SSYT}(\lambda/\nu,[3,n])|$ as the number of elements of ${\rm SSYT}(\lambda/(\nu_2,\nu_2),[2,n])$ that have $\nu_1-\nu_2$ $2$'s, all in the first row.  Similarly, one can think of $f^{(a,b)/\nu}$ as the number of elements of ${\rm SYT}((a,b)/(\nu_2,\nu_2))$ such that $1,\dots, \nu_1-\nu_2$ all appear in the first row.  As a result, for a fixed $p\leq b$, $\sum_{\nu: \nu_2=p}|{\rm SSYT}(\lambda/\nu,[3,n])|f^{(a,b)/\nu}$ equals the number of pairs $(S,T)\in {\rm SSYT}(\lambda/(p,p),[2,n])\times {\rm SYT}((a,b)/(p,p))$ such that all of the $2$'s in $S$ are in the first row, and if $i$ is in the second row of $T$, then the $i$th smallest element of $S$ is at least $3$.  Summing over all $p$ gives us the desired answer.
    \end{proof}
We note that this computation relied on the relatively simple relationship between the Schur coefficients indexed by two rows and the monomial coefficients indexed by corresponding powers of $x_1$ and $x_2$ (see the first line of the proof of \ref{cor:tworows}). We were not able to use the same method to compute the coefficients corresponding to $\mu$ for $\mu$ having more than two rows.

\section{Multiplicity of a Column}

In this section we compute the coefficient of $s_{(1^{|\lambda|})}$ in the expansion of $\sbar_\lambda$. This proof uses the Lindstr\"om-Gessel-Viennot lemma for counting lattice paths, (see e.g. \cite{gessel}).
\begin{theorem}
    \label{thm:singlecolumn}
    For any $\ell(\lambda)\leq p\leq n-\lambda_1$,
    $[s_{(1^{|\lambda|})}]\sbar_{\lambda}(x_1,\dots, x_n)$ equals the number of fillings $T$ of $\lambda$ with numbers in $[2,n]$ such that for all $(r,c)\in \lambda$:
    \begin{enumerate}
        \item 
        $T(r,c)\leq T(r+1,c)$, and they are not both equal to $p-r+c$
        \item 
        Either $T(r-1,c)<T(r,c)$ or $T(r-1,c)=T(r,c)=p-r+c$.
    \end{enumerate}
\end{theorem}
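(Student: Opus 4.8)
The plan is to reduce the computation of $[s_{(1^{|\lambda|})}]\sbar_\lambda$ to a determinant whose entries are the single-row column-coefficients of Corollary \ref{cor:rowcolumnmult}, and then to interpret that determinant via the Lindström--Gessel--Viennot lemma. The starting point is the observation that, as an identity of symmetric functions, the Jacobi--Trudi formula $s_\lambda = \det(h_{\lambda_i - i + j})_{1 \le i,j \le \ell(\lambda)}$ remains valid after substituting the Chern roots $y_i = (x_1 + \dots + x_n) - x_i$ for the variables. Since $h_k(y_1,\dots,y_n) = s_{(k)}(y_1,\dots,y_n) = \sbar_{(k)}$, this gives $\sbar_\lambda = \det\big(\sbar_{(\lambda_i - i + j)}\big)_{1 \le i,j \le \ell(\lambda)}$, with the conventions $\sbar_{(0)} = 1$ and $\sbar_{(b)} = 0$ for $b<0$.

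First I would extract the coefficient of $s_{(1^{|\lambda|})}$ from this determinant. Expanding it as a signed sum over $\sigma \in \mathfrak{S}_{\ell(\lambda)}$ of products $\prod_i \sbar_{(b_i)}$ with $b_i = \lambda_i - i + \sigma(i)$ and $\sum_i b_i = |\lambda|$, the key point is that the coefficient of a single column factors: $[s_{(1^{|\lambda|})}]\prod_i \sbar_{(b_i)} = \prod_i [s_{(1^{b_i})}]\sbar_{(b_i)}$. This holds because, by the Littlewood--Richardson rule, $s_{(1^{|\lambda|})}$ occurs in a product $\prod_i s_{\mu^{(i)}}$ only when every $\mu^{(i)}$ is a single column, and then with multiplicity one. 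Hence I obtain the clean formula $[s_{(1^{|\lambda|})}]\sbar_\lambda = \det\big([s_{(1^{\lambda_i - i + j})}]\sbar_{(\lambda_i - i + j)}\big)_{1\le i,j\le \ell(\lambda)}$, each entry being a single-row column-coefficient. This is exactly where Corollary \ref{cor:rowcolumnmult} enters: it rewrites each entry as the number of single-row fillings of a prescribed length, weakly increasing with entries in $[2,n]$, subject to the forbidden adjacency $(S(1,l),S(1,l+1)) \ne (p+l,p+l)$.

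Next I would encode these single-row fillings as monotone lattice paths and apply the Lindström--Gessel--Viennot lemma (\cite{gessel}). The weakly increasing sequences in $[2,n]$ are monotone paths in a planar network, and the forbidden adjacency is built in by deleting the single vertex on each diagonal that the forbidden pattern would traverse; the parameter $p$ supplies the common shift, so that the path recording row $r$ uses forbidden value $p - r + c$ in column $c$, and the hypothesis $\ell(\lambda) \le p \le n - \lambda_1$ is precisely what makes the single-row parameter valid (and nonnegative) for every row simultaneously. With sources $A_1,\dots,A_{\ell(\lambda)}$ and sinks $B_1,\dots,B_{\ell(\lambda)}$ chosen so that the number of admissible paths $A_i \to B_j$ equals the $(i,j)$ entry, the lemma identifies the determinant with the number of vertex-disjoint path families; planarity forces the connecting permutation to be the identity, so each family assembles into a filling $T$ of $\lambda$ whose $r$th row is the $r$th path.

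The main obstacle I anticipate is the final bijection: verifying that vertex-disjointness of the paths recording consecutive rows translates \emph{exactly} into conditions (1) and (2), including the delicate clauses permitting an equality $T(r,c)=T(r+1,c)$ precisely when the common value is the special diagonal value $p-r+c$. This requires choosing the path coordinates so that two consecutive paths touch if and only if the corresponding column entries violate the almost-strict rule, and checking that the deleted vertices (the forbidden adjacencies internal to each path) mesh consistently with the non-intersection constraints rather than over-forbidding. I would also need to dispose of the boundary entries of the determinant where $\lambda_i - i + j \le 0$ (contributing the factors $\sbar_{(0)} = 1$ or $0$), and to confirm that the resulting count is genuinely independent of the choice of $p$ in the allowed range, even though the description of the fillings depends on $p$.
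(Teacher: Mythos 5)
Your proposal follows essentially the same route as the paper's proof: the Jacobi--Trudi reduction to a determinant of single-row terms, entry-wise extraction of the column coefficient (your Littlewood--Richardson factorization argument makes explicit a step the paper only asserts), interpretation of each entry via Corollary \ref{cor:rowcolumnmult}, and the Lindstr\"om--Gessel--Viennot lemma on a planar network whose non-intersecting path families biject with the fillings $T$. The verifications you flag as remaining work (translating vertex-disjointness into conditions (1) and (2), and the role of the parameter $p$) are exactly what the paper carries out with its explicit directed graph on $[n]\times[n]$, so your plan is sound and matches the published argument.
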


\begin{proof}
     By the Jacobi-Trudi formula, 
\begin{align*}
    s_{\lambda}(x_1,\dots, x_n) = \det(s_{(\lambda_i - i + j)}(x_1,\dots, x_n))_{1\leq i,j\leq \ell(\lambda)},
\end{align*}
so 
\begin{align*}
    \sbar_{\lambda}(x_1,\dots, x_n) = \det(\sbar_{(\lambda_i - i + j)}(x_1,\dots, x_n))_{1\leq i,j\leq \ell(\lambda)},
\end{align*}
and so in particular
\begin{equation}
    \label{eq:det1}
    [s_{(1^{|\lambda|})}]\sbar_{\lambda}(x_1,\dots, x_n)
    = \det([s_{(1^{\lambda_i-i+j})}]\sbar_{(\lambda_i - i + j)}(x_1,\dots, x_n))_{1\leq i,j\leq \ell(\lambda)}
\end{equation}

Consider the
directed graph with vertex set $V = [n]\times [n]$, with directed edges from $(x,y)$ to $(x,y+1)$ for all $x,y$, from $(x,x)$ to $(x+1,x+1)$ for each $x>1$, and from $(x,y)$ to $(x+1,y)$ for $y\not=1$ and $x\not=y$. Here is a picture of this planar network when $n=4$:
\begin{center}
\begin{tikzpicture}[%
  every node/.style={draw,fill=black,circle,minimum size=.4cm},node distance=1cm]
  % the vertices
  \node[scale=.5] (0) at (0,0) {};
  \node[scale=.5] (1) at (0,1) {};
\node[scale=.5] (2) at (0,2) {};
\node[scale=.5] (3) at (0,3) {};
\node[scale=.5] (4) at (1,0) {};
\node[scale=.5] (5) at (1,1) {};
\node[scale=.5] (6) at (1,2) {};
\node[scale=.5] (7) at (1,3) {};
\node[scale=.5] (8) at (2,0) {};
\node[scale=.5] (9) at (2,1) {};
\node[scale=.5] (10) at (2,2) {};
\node[scale=.5] (11) at (2,3) {};
\node[scale=.5] (12) at (3,0) {};
\node[scale=.5] (13) at (3,1) {};
\node[scale=.5] (14) at (3,2) {};
\node[scale=.5] (15) at (3,3) {};

\draw[->] (0) -- (1);
\draw[->] (1) -- (2);
\draw[->] (2) -- (3);
\draw[->] (4) -- (5);
\draw[->] (5) -- (6);
\draw[->] (6) -- (7);
\draw[->] (8) -- (9);

\draw[->] (9) -- (10);
\draw[->] (10) -- (11);
\draw[->] (12) -- (13);
\draw[->] (13) -- (14);
\draw[->] (14) -- (15);
\draw[->] (1) -- (5);
\draw[->] (2) -- (6);

\draw[->] (3) -- (7);
\draw[->] (5) -- (10);
\draw[->] (6) -- (10);
\draw[->] (7) -- (11);
\draw[->] (9) -- (13);
\draw[->] (10) -- (15);
\draw[->] (11) -- (15);

\end{tikzpicture}
\end{center}
Let $P_{a,b}$ be the number of directed paths from $(a,1)$ to $(b,n)$.
\begin{claim}
\label{claim:pathinterp}
    For any $1\leq p\leq n-k$,
    \begin{align*}
        [s_{(1^{k})}]\sbar_{(k)}(x_1,\dots, x_n) = P_{p,p+k}
    \end{align*}
\end{claim}
\begin{proof}
    For each path counted by $P_{p,p+k}$, consider the sequence of $y$-coordinates at which the non-vertical steps in the path begin.  Because there are no down-steps, this sequence will be unique among all paths counted by $P_{a,b}$, and will be weakly increasing.  Additionally, after performing a non-vertical step starting at point $(x,y)$, the next non-vertical step can start at any $y$-coordinate strictly larger than $y$, or can start at $y$-coordinate $y$ as long as the last step performed was not a diagonal step \emph{i.e.}, $x\not=y$.  As a result, each path counted by $P_{p,p+k}$ corresponds to a weakly increasing sequence $1<c_1\leq \dots \leq c_{k}\leq n$ such that for all $i$,
    $c_{i}$ and $c_{i+1}$ do not both equal $p+i-1$.  If we fill the partition $(k)$ with the sequence  $c_1,\dots, c_{k}$, we get exactly the elements of $\{T\in {\rm SSYT}((k),n): T(1,1)>1, (T(1,i),T(1,i+1)\not= (p+i,p+i)\}$, which by Corollary~\ref{cor:rowcolumnmult} has $[s_{(1^{k})}]\sbar_{(k)}(x_1,\dots, x_n)$ elements, proving the claim.
\end{proof}

Using Claim~\ref{claim:pathinterp} and Equation~\ref{eq:det1}, we have that for any $\ell(\lambda)\leq p\leq n-\lambda_1$

\begin{align*}
\label{eq:det2}
    [s_{(1^{|\lambda|})}]\sbar_{\lambda}(x_1,\dots, x_n) = \det(P_{p+1-j,p+1+\lambda_i-i})_{1\leq i,j\leq \ell(\lambda)}.
\end{align*}

By Lindstr\"om-Gessel-Viennot (\cite{gessel}), and the fact that the graph is planar, this determinant is equal to the number of tuples of non-intersecting paths that connect each $(p+1-j,1)$ to $(p+1+\lambda_i-i,n)$ for $1\leq i \leq \ell(\lambda)$.  Consider one such tuple, and construct a filling $T$ of shape $\lambda$ such that $T(r,c)$ equals the $y$-coordinate of the $c$th non-vertical step of the path that goes from $(p+1-r,1)$ to $(p+1+\lambda_r-r,n)$.  From Claim~\ref{claim:pathinterp}, this means that the content of each row of $T$ will be a weakly increasing sequence, except that $T(r,c)$ and $T(r,c+1)$ cannot both be $p+c-r$.  Similarly, two adjacent paths will intersect if and only if there is some $c$ such that the the endpoint of $c$th non-vertical step of the left path is weakly below the start point of the $c$th non-vertical step of the right path.  As a result, the paths are non-intersecting if and only if for all $(r,c)\in \lambda$, either $T(r-1,c)<T(r,c)$ or $T(r-1,c)=T(r,c) = p+c-r$.  As a result, the number of tuples of non-intersecting lattice paths will be equal to the number of such fillings, completing the proof.

\end{proof}

\section{A representation-theoretic interpretation of the column case}

We begin by passing to infinitely many variables in order to view $\sbar_{(1^k)}$ as a symmetric \emph{function}, instead of a symmetric polynomial. In doing so, we allow the integers $n$ and $k$ to remain as parameters in our definition. We use the expression in Lemma \ref{lemma:alt} above, replacing $i$ with $k-i$ (in order to more clearly interpret the binomial coefficient appearing the definition). 

\begin{definition}
\label{def:c(x)}
Let $n \geq k$ be positive integers. Let $c_{n,k}(x) = c_{n,k}(x_1, x_2, \dots)$ be the symmetric function
\[
c_{n,k}(x) = \sum_{i=0}^k(-1)^{k-i}\binom{n-k+i}{i}s_{(1^{(k-i)})}s_{(1)}^i
\]
\end{definition}
Note that $c_{n,k}(x_1, \dots, x_n, 0, 0, \dots) = \sbar_{(1^k)}(x_1, \dots, x_n)$. We proceed by exhibiting an $\mathfrak{S}_k$-module whose Frobenius characteristic is exactly $c_{n,k}$ (recall Definition \ref{def:frob}).

The following discussion generalizes Theorem 2.4 in \cite{reinerwebb}. The appearance of the binomial coefficient in Definition \ref{def:c(x)} precludes the use of poset techniques in what follows.

Let $A$ and $M$ be finite nonempty sets of integers. Let $\Delta_i(A, M)$ denote the set of pairs $(w, S)$, where $w$ is an \emph{injective word} of length $i$ on the alphabet $A$, and $S$ is a nondecreasing sequence of length $i$ of elements of the set $M$. Here by injective word of length $i$ on the alphabet $A$, we mean a sequence of elements of $A$ where no element is repeated. We write such a sequence by simply concatenating its elements. Let $C_i(A, M)$ denote the $\C$-vector space with basis $\Delta_i(A, M)$. 

\begin{example}
    Let $A = [2]$ and $M = [2]$. Then the elements of $\Delta_2(A, M)$ are
\[
\{(12, (1,1)), (12, (1,2)), (12, (2,2)), (21, (1,1)), (21, (1,2)), (21, (2, 2))\}
\]
\end{example}
We define an $\mathfrak{S}_k$ action on $C_i([k], [n-k+1])$ by letting
    \[
    \sigma \cdot (w_1\dots w_i, S) = \text{sign}(\sigma)( \sigma(w_{1})\dots \sigma(w_{i}), S)
    \]
(so $\mathfrak{S}_k$ acts trivially on $S$). This action has a familiar-looking Frobenius characteristic:

\begin{proposition}
\label{prop:characters}
The Frobenius characteristic of the $\mathfrak{S}_k$-module $C_i([k], [n-k+1])$ is given by
\[
\text{ch}(C_i([k], [n-k+1])) =  \binom{n-k+i}{i}s_{(1^{(k-i)})}s_{(1)}^i
\]
\end{proposition}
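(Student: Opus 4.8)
The plan is to exploit the fact that $\mathfrak{S}_k$ acts trivially on the second coordinate $S$, so that $C_i([k],[n-k+1])$ splits as a direct sum of copies of the module spanned by the words alone. Concretely, let $W_i$ denote the $\mathfrak{S}_k$-module with basis the injective words of length $i$ on the alphabet $[k]$, under the same signed action $\sigma\cdot(w_1\cdots w_i) = \text{sign}(\sigma)(\sigma(w_1)\cdots\sigma(w_i))$. The nondecreasing sequences $S$ of length $i$ drawn from $[n-k+1]$ number exactly $\binom{(n-k+1)+i-1}{i} = \binom{n-k+i}{i}$, and each is fixed by the action, so the span of $\{(w,S): w \text{ injective}\}$ for a fixed $S$ is a submodule isomorphic to $W_i$. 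This gives $C_i([k],[n-k+1]) \cong W_i^{\oplus \binom{n-k+i}{i}}$, whence $\text{ch}(C_i([k],[n-k+1])) = \binom{n-k+i}{i}\,\text{ch}(W_i)$, and it suffices to prove $\text{ch}(W_i) = s_{(1^{k-i})}s_{(1)}^i$.

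To compute $\text{ch}(W_i)$, I would first forget the sign and identify the underlying permutation module. The group $\mathfrak{S}_k$ acts transitively on injective words of length $i$: given two such words, one may send the letters of the first to the corresponding letters of the second and extend bijectively on the complement. Hence, as a permutation module, $W_i$ is induced from the stabilizer of a single word. Taking $w = 12\cdots i$, a permutation fixes it precisely when it fixes each of $1,\dots,i$ pointwise, i.e.\ when it lies in the Young subgroup $\mathfrak{S}_{(1^i,\,k-i)} = \mathfrak{S}_1\times\cdots\times\mathfrak{S}_1\times\mathfrak{S}_{k-i}$ permuting only $\{i+1,\dots,k\}$. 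So the unsigned module is $\text{Ind}_{\mathfrak{S}_{(1^i,k-i)}}^{\mathfrak{S}_k}\mathbf{1}$, and reinstating the sign twist gives $W_i \cong \big(\text{Ind}_{\mathfrak{S}_{(1^i,k-i)}}^{\mathfrak{S}_k}\mathbf{1}\big)\otimes \text{sgn}$. By the projection formula, this equals $\text{Ind}_{\mathfrak{S}_{(1^i,k-i)}}^{\mathfrak{S}_k}\big(\text{sgn}|_{\mathfrak{S}_{(1^i,k-i)}}\big)$.

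Finally, I would apply the standard multiplicativity of the Frobenius characteristic under induction from a Young subgroup, namely $\text{ch}\big(\text{Ind}_{\mathfrak{S}_{\alpha}}^{\mathfrak{S}_k}(V_1\boxtimes\cdots\boxtimes V_r)\big) = \prod_j \text{ch}(V_j)$. The restriction of $\text{sgn}$ to $\mathfrak{S}_{(1^i,k-i)}$ is trivial on each $\mathfrak{S}_1$ factor and the sign representation on the $\mathfrak{S}_{k-i}$ factor; since $\text{ch}$ of the trivial $\mathfrak{S}_1$-module is $s_{(1)}$ and $\text{ch}$ of the sign representation of $\mathfrak{S}_{k-i}$ is $s_{(1^{k-i})}$, we obtain $\text{ch}(W_i) = s_{(1)}^i\, s_{(1^{k-i})}$. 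Multiplying by $\binom{n-k+i}{i}$ yields the claim. As a consistency check, $\dim W_i = k!/(k-i)! = [\mathfrak{S}_k:\mathfrak{S}_{(1^i,k-i)}]$ matches the dimension predicted by the induced module.

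I expect the only genuinely delicate point to be the bookkeeping of the sign twist: one must correctly pass from inducing the trivial representation to inducing the \emph{sign} of the stabilizer, and then feed the sign representation of $\mathfrak{S}_{k-i}$ (whose image under $\text{ch}$ is the column $s_{(1^{k-i})}$, not the row $s_{(k-i)}$) into the product formula. The remaining ingredients --- transitivity of the action, the identification of the stabilizer as the Young subgroup $\mathfrak{S}_{(1^i,k-i)}$, and the multiplicativity of $\text{ch}$ under induction --- are standard (see e.g.\ \cite{macdonald}).
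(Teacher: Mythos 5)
Your proof is correct and takes essentially the same approach as the paper's: both split $C_i([k],[n-k+1])$ into $\binom{n-k+i}{i}$ copies (one per nondecreasing sequence $S$) of the sign-twisted injective-word module, identify that module as an induced representation from a Young subgroup, and finish with the multiplicativity of $\text{ch}$ under such induction. The only cosmetic difference is that you induce $\text{sgn}$ from $\mathfrak{S}_1^{\times i}\times\mathfrak{S}_{k-i}$ (via orbit--stabilizer and the projection formula), while the paper induces $\text{sign}_{k-i}\boxtimes\C[\mathfrak{S}_i]$ from $\mathfrak{S}_{k-i}\times\mathfrak{S}_i$; these agree by induction in stages, since $\C[\mathfrak{S}_i]=\text{Ind}_{\mathfrak{S}_1^{\times i}}^{\mathfrak{S}_i}\mathbf{1}$.
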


\begin{proof} Let $\Gamma$ denote the collection of nondecreasing sequences of elements of the set $[n-k+1]$, and note that $|\Gamma| = \binom{n-k+i}{i}$. As $\mathfrak{S}_k$-modules, we have that
\[
C_i([k], [n-k+1]) \cong \bigoplus_{S \in \Gamma} (\text{sign}_{k-i} \otimes \C[\mathfrak{S}_i]){\mathord\uparrow}^{\mathfrak{S}_k}_{\mathfrak{S}_{k-i} \times \mathfrak{S}_i}
\]
where $\text{sign}_{k-i}$ is the sign representation of $\mathfrak{S}_{k-i}$, and $\C[\mathfrak{S}_i]$ is the regular representation of $\mathfrak{S}_i$. The Frobenius image of each summand of the right-hand side is the product 
\[
\text{ch}(\text{sign}_{k-i}) \text{ch}(\C[\mathfrak{S}_i]) = s_{(1^{(k-i)})}s_{(1)}^i
\]
Taking the direct sum indexed by $\Gamma$ gives the factor $\binom{n-k+i}{i}$.

\end{proof}

We proceed to define an algebraic chain complex and study its homology. For $1 \leq i \leq k$, let $\partial_i: C_i(A, M) \to C_{i-1}(A, M)$ be defined by 
\[
\partial_i(w_1 \dots w_i, (j_1, \dots, j_i)) = \sum_{l=1}^i (-1)^{l-1} (w_1\dots \widehat{w_l}\dots w_i, (j_1, \dots, \widehat{j_l}, \dots, j_i))
\]

Looking at the case $A = [k], M = [n-k+1]$, the maps $\partial_i$ are $\mathfrak{S}_k$-equivariant and for $1 \leq i \leq k-1$, $\partial_{i} \circ \partial_{i+1}=0$, so the modules $C_i([k], [n-k+1])$ form a chain complex of $\mathfrak{S}_k$-modules. For arbitrary nonempty sets $A, M$, we let $C_*(A, M)$ denote the corresponding chain complex of vector spaces.

The following lemma is an adaptation of Theorem 3 in \cite{gan}.
\begin{lemma} 
\label{lem:homconcentration}
Let $H_i = H_i(C_*(A, M))$ denote the homology of the chain complex $C_*(A, M)$ in degree $i$. Then $H_i = 0$ for $0 \leq i < k$.
\end{lemma}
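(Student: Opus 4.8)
The plan is to argue by induction on $|A|$. Note first that the parameter $k$ in the statement must be read as $k = |A|$: since $w$ is injective, $C_i(A,M) = 0$ for $i > |A|$, so the complex is concentrated in degrees $0,\dots,|A|$, and the claim is that all homology below the top degree vanishes. The base case $|A| = 1$ is immediate, as the complex lives only in degrees $0$ and $1$, and since $M \neq \emptyset$ the map $\partial_1$ is surjective onto $C_0 \cong \mathbb{C}$, giving $H_0 = 0$.

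For the inductive step I would fix $a^* \in A$ and set $A' = A \setminus \{a^*\}$ and $\mu^* = \min M$. The pairs $(w,S)$ with $a^* \notin w$ span a subcomplex $E_* \cong C_*(A', M)$, with quotient $Q_*$ spanned by the pairs in which $a^*$ occurs. First I would build the homotopy $h$ that prepends the pair $(a^*, \mu^*)$, setting $h(w,S) = (a^* w, \mu^* S)$ when $a^* \notin w$ (this is well-defined, since $a^* w$ stays injective and $\mu^* S$ stays nondecreasing) and $h(w,S) = 0$ otherwise. A short computation of $\partial h + h \partial$ restricted to $E_*$ shows it is the identity there; equivalently $h|_{E_*}$ exhibits the inclusion $\iota \colon E_* \hookrightarrow C_*$ as null-homotopic, so $\iota_* = 0$ on homology. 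Plugging this into the long exact sequence of $0 \to E_* \to C_* \to Q_* \to 0$, and using the inductive hypothesis $H_{i-1}(E_*) = 0$ for $i < k$ (valid since $|A'| = k-1$), gives $H_i(C_*) \cong H_i(Q_*)$ for all $i < k$.

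It then remains to show $H_i(Q_*) = 0$ for $i < k$, and here I would filter $Q_*$ by the position of $a^*$. Within $Q_*$ a boundary never deletes $a^*$ (that term lands in $E_*$), so deleting a letter can only decrease the position of $a^*$; hence the pairs with $a^*$ in position $\leq s$ form a subcomplex. The associated graded piece $\mathrm{gr}^{(s)}$ consists of pairs with $a^*$ frozen in position $s$, with a differential that deletes only the letters to its right. Writing $P$ for the set of letters preceding $a^*$, $v$ for the label of $a^*$, and $M_{\geq v} := \{m \in M : m \geq v\}$, removal of the frozen length-$s$ prefix identifies each such piece, up to a degree shift by $s$, with a direct sum of copies of $C_*(A' \setminus P,\, M_{\geq v})$. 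Since $|A' \setminus P| = k - s$, the inductive hypothesis gives vanishing below degree $k-s$, i.e. in $Q_*$-degree below $k$; as the filtration is finite, $H_i(Q_*) = 0$ for $i < k$, completing the induction.

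The hard part will be the bookkeeping in this last step: I must verify that freezing the prefix and passing to $M_{\geq v}$ really yields a smaller instance of the same complex. This requires checking that the nondecreasing constraint on $S$ splits cleanly across the frozen position (labels before $a^*$ are $\leq v$, those after are $\geq v$) and that the induced signs agree with the differential of $C_*(A'\setminus P, M_{\geq v})$ up to a global sign. This interaction between the injectivity constraint on $w$ and the monotonicity constraint on $S$ is exactly the feature absent from the unlabeled complex of injective words in \cite{gan}, and it is where adapting that argument demands real care.
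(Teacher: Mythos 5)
Your proof is correct and is essentially the paper's own argument (which follows Gan): the same induction on $|A|$ with the set $M$ allowed to vary, the same filtration by the position of a distinguished letter whose graded pieces are shifted copies of $C_*(A'\setminus P,\, M_{\geq v})$, and the same null-homotopy $(w,S)\mapsto(a^*w,\,\mu^*S)$ showing that the inclusion of the subcomplex of words avoiding $a^*$ is null-homotopic. The only difference is packaging: you split off $E_*$ first via the long exact sequence (using the null-homotopy exactly where it is needed, at degree $k-1$) and then handle $Q_*$ with elementary finite-filtration bookkeeping, whereas the paper runs the spectral sequence of the full filtration $F_0\subset F_1\subset\cdots$ and uses the null-homotopy to kill the edge map $E^1_{0,k-1}\to E^\infty_{0,k-1}$ --- the mathematical content is identical.
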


\begin{proof} The proof is essentially the same as that of Theorem 3 in \cite{gan}: We use induction on $|A|$, and modify the induction hypothesis so that the set $M$ is not fixed. We define the same filtration and now have that 
\begin{align*}
    F_p&C_*(A, M)/F_{p-1}C_*(A, M)  \cong \\
    & \bigoplus_{(w_1\dots w_{p-1} \max(A), (j_1, \dots, j_i)) \in \Delta_p(A, M)} C_{*-p}(A \setminus \{w_1, \dots, w_{p-1}, \max(A)\}, M \setminus [j_p - 1]) 
\end{align*}
and we can apply the induction hypothesis to the right-hand side.

Continuing as in \cite{gan}, we have that the map
\[
(w_1\dots w_r, (j_1, \dots, j_r)) \mapsto (\max(A)w_1\dots w_r, (1, j_1, \dots, j_r)) 
\]
gives a null-homotopy for the inclusion map $C_*(A \setminus \max(A), M) \to C_*(A, M)$. So the edge map $E^1_{0, k-1} \to E^\infty_{0, k-1} \subset H_{k-1}(C_*(A, M))$ is zero, and hence $E^\infty_{0,k-1} = 0$. Here we have used the theory of spectral sequences in the same way as \cite{gan}.
\end{proof}
We are now able to state the main result of this section.
\begin{theorem} 
\label{thm:c(x)}
\[
c_{n,k}(x) = \sum_{i=0}^k (-1)^{k-i} \text{ch}(C_i) = \text{ch}(H_k)
\]
\end{theorem}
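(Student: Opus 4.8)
\textbf{The plan} is to establish the two equalities in Theorem~\ref{thm:c(x)} separately, treating the left equality as a consequence of the Frobenius characteristic computation already in hand, and the right equality as an application of the Euler characteristic principle combined with the homology concentration result.

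For the left equality, I would simply invoke Proposition~\ref{prop:characters}, which computes $\text{ch}(C_i([k],[n-k+1])) = \binom{n-k+i}{i}s_{(1^{(k-i)})}s_{(1)}^i$. Substituting this into the alternating sum $\sum_{i=0}^k (-1)^{k-i}\text{ch}(C_i)$ yields exactly the defining expression for $c_{n,k}(x)$ in Definition~\ref{def:c(x)}, since the Frobenius map is linear and the sign $(-1)^{k-i}$ matches term by term. This step is essentially a bookkeeping verification.

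The right equality is the substantive one, and the key tool is the fact that the Frobenius characteristic of the alternating sum of the terms of a chain complex of $\mathfrak{S}_k$-modules equals the alternating sum of the Frobenius characteristics of its homology modules (the Hopf trace / Euler characteristic identity, valid because $\text{ch}$ is additive on short exact sequences and the maps $\partial_i$ are $\mathfrak{S}_k$-equivariant). This gives
\begin{equation*}
    \sum_{i=0}^k (-1)^{k-i}\text{ch}(C_i) = \sum_{i=0}^k (-1)^{k-i}\text{ch}(H_i).
\end{equation*}
Then I would apply Lemma~\ref{lem:homconcentration}, which asserts $H_i = 0$ for $0 \leq i < k$, so that the only surviving term on the right-hand side is the $i=k$ term, namely $(-1)^{k-k}\text{ch}(H_k) = \text{ch}(H_k)$. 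Combining this with the left equality completes the proof.

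\textbf{The main obstacle} is ensuring that the Euler characteristic identity is applied correctly at the level of $\mathfrak{S}_k$-equivariant Frobenius characteristics rather than merely at the level of dimensions; one must confirm that each $C_i$ is genuinely an $\mathfrak{S}_k$-module (established in the discussion preceding the theorem, where the $\partial_i$ are noted to be equivariant and to satisfy $\partial_i \circ \partial_{i+1} = 0$) so that the homology modules $H_i$ inherit an $\mathfrak{S}_k$-action and the additivity of $\text{ch}$ applies summand by summand. The real content, however, has already been offloaded into Lemma~\ref{lem:homconcentration}, so once that concentration result is granted, the theorem follows almost immediately; I would take care only to state the Euler characteristic principle explicitly and to note that the top-degree term carries the trivial sign.
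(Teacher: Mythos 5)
Your proposal is correct and matches the paper's own proof: the first equality follows from Definition~\ref{def:c(x)} together with Proposition~\ref{prop:characters}, and the second from the Hopf trace formula combined with the vanishing $H_i = 0$ for $i < k$ from Lemma~\ref{lem:homconcentration}. The paper states this in one sentence; you have simply spelled out the same two steps in more detail.
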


\begin{proof}
    The first equality is given by Definition \ref{def:c(x)}, and the second equality follows from Lemma \ref{lem:homconcentration} and the Hopf trace formula.
\end{proof}

Also, one can see that the Schur function expansion of $c_{n,k}$ is the same as the Schur polynomial expansion of $\sbar_{(1^k)}(x_1,\dots, x_n)$, so Theorem \ref{thm:c(x)} gives a representation-theoretic proof of the Schur positivity of $\sbar_{(1^k)}(x_1,\dots, x_n)$.

\section{Conclusion and open problems}

We conclude with some explicit open problems related to this work. We have found combinatorial expressions for $[s_{\mu}] \sbar_\lambda(x_1, \dots, x_n)$ for $\mu$ being a single column, or one or two rows, and for $\lambda$ being a single row or single column. This naturally leads to
\begin{problem}
    Find a combinatorial expression for $[s_{\mu}] \sbar_\lambda(x_1, \dots, x_n)$ for general $\mu$ and $\lambda$.
\end{problem}
There is also a natural approach to the representation theory of the case when $\lambda$ is a single row. Again let $A$ and $M$ denote finite nonempty sets of integers. Let $\Delta_i ' (A, M)$ be the set of pairs $(w, S)$ of injective words of length $i$ on the alphabet $A$ and \emph{increasing} sequences $S$ of length $i$ of elements of the set $M$. Let $C'_i(A, M)$ denote the $\mathbb{C}$-vector space with basis $\Delta'_i(A, M)$.

Now define an $\mathfrak{S}_k$ action on $C'_i([k], [n+k-1])$ by letting 
\[
\sigma \cdot (w_1 \dots w_k, S) = (\sigma(w_1) \dots \sigma(w_k), S)
\]
Note that we no longer have a ``sign'' term in this action.

By a similar argument as in Proposition \ref{prop:characters}, we have that the Frobenius characteristic of $C'_i([k], [n+k-1])$ is given by
\[
ch(C'_i([k], [n+k-1]) = \dbinom{n+k-1}{i} s_{(k-i)}s_{(1)}^i
\]
Finally, we define the symmetric function
\[
r_{n, k}(x_1, x_2, \dots) = \sum_{i=0}^k (-1)^{k-i} \dbinom{n+k-1}{i} s_{(k-i)}s_{(1)}^i
\]
Then using Lemma \ref{lemma:alt}, we see that $r_{n, k}(x_1, \dots x_n, 0, 0, \dots) = \sbar_{(k)}(x_1, \dots, x_n)$. And so the Schur function expansion of $r_{n, k}$ will be the same as the Schur polynomial expansion of $\sbar_{(k)}$ \emph{as long as} $n \geq k$ (since otherwise the Schur functions appearing in the former expansion may contain terms indexed by partitions of length greater than $n$). We can define a chain complex in the same way as above, with the exact same chain maps. Attempting to follow the same argument as in Section 6, we are led to the following
\begin{problem}
    Let $H_i$ denote the $i^{th}$ homology group in this chain complex. Prove that $H_i = 0$ for $i < k$, as long as $|M| \geq 2|A|-1$.
\end{problem}
We have some computational evidence for this conjecture, but were not able to use the same filtration and spectral sequence method to prove it. A proof of this conjecture would give a homological model for the row case.

\newpage

\printbibliography

\end{document}